\documentclass
{amsart}

\usepackage{amssymb,mathrsfs}
\usepackage{amsmath}
\usepackage{amsfonts, mathrsfs}
\usepackage{amsthm}
\usepackage{mathabx}
\usepackage{enumitem}
\usepackage
{hyperref}
\usepackage{amsmath,amsthm,amsfonts,amssymb,mathtools}

\newcommand{\Be}{\begin{equation}}
\newcommand{\Ee}{\end{equation}}
\newcommand{\Bea}{\begin{eqnarray}}
\newcommand{\Eea}{\end{eqnarray}}
\newcommand{\Bel}{\begin{align}}
\newcommand{\Eel}{\end{align}}
\newcommand{\Beas}{\begin{eqnarray*}}
\newcommand{\Eeas}{\end{eqnarray*}}
\newcommand{\Benu}{\begin{enumerate}}
\newcommand{\Eenu}{\end{enumerate}}
\newcommand{\Bi}{\begin{itemize}}
\newcommand{\Ei}{\end{itemize}}

\setlength{\parskip}{1pt}%
\newcommand\supp{\operatorname{supp}}

\def\R{{\mathbb R}}
\def\Z{{\mathbb Z}}

\def\C{{\mathbb C}}

\def\H{{\mathbb H}}

\newcommand{\pj}{\mathcal P_{\!j}}

\theoremstyle{plain}
\newtheorem{thm}{Theorem}[section]

\newtheorem{lem}[thm]{Lemma}
\newtheorem{prop}[thm]{Proposition}

\theoremstyle{remark}
  
\theoremstyle{definition}
\newtheorem*{defn}{Definition}

\numberwithin{equation}{section}

\newcommand{\astt}{\ast_{\mathbb H}}
\usepackage[utf8]{inputenc}
\usepackage{color,soul}

\setul{0.5ex}{0.3ex}
\setulcolor{red}

\begin{document}

\title[]{$L^p-L^q$ estimates for the circular maximal operator on  Heisenberg radial functions
}
\author[Juyoung Lee]{Juyoung Lee}
\author[Sanghyuk Lee]{Sanghyuk Lee}

\address{Department of Mathematical Sciences and RIM, Seoul National University, Seoul 08826, Republic of  Korea}
\email{shklee@snu.ac.kr}
\email{ljy219@snu.ac.kr } 

\subjclass[2010]{{42B25, 22E25, 35S30}}
\keywords{circular maximal operator, Heisenberg group}

\begin{abstract}
$L^p$ boundedness of the circular maximal function  $\mathcal M_{\H^1}$  on the Heisenberg group $\H^1$ has received considerable attentions. While the problem  still remains 
open,  $L^p$ boundedness of $\mathcal M_{\H^1}$ on Heisenberg radial functions  was recently shown for $p>2$ by Beltran, Guo, Hickman, and Seeger \cite{BGHS}.  
In this paper we extend their result considering  the local maximal operator   $M_{\H^1}$ which is defined by  taking  supremum over $1<t<2$. We  prove  $L^p$--$L^q$ estimates for $M_{\H^1}$ on Heisenberg radial functions on the optimal range of $p,q$  modulo the borderline cases.  Our argument also provides a simpler  proof of the aforementioned  result due to Beltran et al.  
\end{abstract}
\maketitle

\section{introduction}
For $d\ge 2$ the spherical maximal function is given by
\[ \mathcal M_{\R^d} f(x)=\sup_{t>0}\left| \frac{1}{\sigma(\mathbb{S}^{d-1})} \int_{\mathbb S^{n-1}}f(x-ty)d\sigma(y)\right|, \]
where $\mathbb{S}^{d-1}\subset \mathbb R^d$ is the $(d-1)$-dimensional sphere centered at the origin and $d\sigma$ is the surface measure on $\mathbb S^{d-1}$.
When $d\ge 3$, it was shown by 
Stein \cite{Stein2}  that  $\mathcal M_{\R^d} f$ is bounded on $L^p$ if and only if $p>\frac{d}{d-1}$.  The case $d=2$ was later settled by   Bourgain \cite{B2}. 
An alternative proof of Bourgain's result was subsequently found by  Mockenhaupt, Seeger, Sogge \cite{MSS2}, who used a  local smoothing estimate for the wave operator.  
We now consider the local maximal operator 
\[  M_{\R^d}f(x)=\sup_{1<t<2}\left|\int_{\mathbb S^{d-1}}f(x-ty)d\sigma(y)\right|. \]
As is easy to see, the maximal operator $\mathcal M_{\R^d}$ can not be bounded from $L^p$ to $L^q$ unless  $p=q$. However,  
$M_{\R^d}$ is bounded from $L^p$ to $L^q$ for some $p<q$   thanks to the supremum taken over  the restricted range $[1,2]$. This phenomenon is called  \emph{$L^p$ improving}.  
Almost complete characterization of  $L^p$ improving property of  $M_{\R^2}$  was obtained by Schlag  \cite{Schlag97} except for the endpoint cases. A different proof  which is based on 
$L^p$--$L^q_\alpha$ smoothing estimate for the wave operator  was also found  by  Schlag and Sogge \cite{SS}. They also 
proved  $L^p$--$L^q$ boundedness  of $\mathcal M_{\R^d}$ for $d\geq 3$ which  is optimal up to the borderline cases.  Most of  the left open endpoint cases were settled by the second author \cite{L} but 
there are some endpoint cases where $L^p$--$L^q$ estimate remains unknown though restricted weak type bounds are available for such cases.  
There are results which extend the aforementioned results  to variable coefficient settings, see \cite{Sogge, SS}. Also, see   \cite{AHRS,  RS,  BORSS} and references therein for
recent extensions  of the earlier results. 

%
%

The analogous spherical maximal operators on the Heisenberg group $\H^n$ also have attracted considerable interests. 
The Heisenberg group $\H^n$ can be identified by $\R^{2n}\times \R$ with the  noncommutative multiplication law 
\[ (x,x_{2n+1})\cdot (y,y_{2n+1})=(x+y,x_{2n+1}+y_{2n+1}+x\cdot Ay), \]
where  $(x,x_{2n+1})\in \mathbb R^{2n}\times \mathbb R$ and $A$ is the $2n\times 2n$ matrix  given by \[A=\begin{pmatrix}
0 & -I_n\\
I_n & 0
\end{pmatrix}.\] The natural dilation structure on $\mathbb H^n$ is $t(x,x_{2n+1})=(tx, t^2x_{2n+1})$. 
Abusing the notation,  since there is no ambiguity, we denote by $d\sigma$ the usual surface measure of $\mathbb S^{2n-1}\times \lbrace 0\rbrace$. Then, the dilation $d\sigma_t$ of the measure 
$d\sigma$ is defined by $\langle f, d\sigma_t\rangle=\langle f(t\cdot),d\sigma\rangle$. Thus,  the average over the sphere is now given by  
\[ f\astt d\sigma_t (x,x_{2n+1})=\int_{\mathbb S^{2n-1}}f(x-ty,x_{2n+1}-tx\cdot Ay) d\sigma(y). \]
We consider the associated local  spherical maximal operator
\[ M_{\H^n}f(x,x_{2n+1})=\sup_{1<t<2}\left| f\astt d\sigma_t (x,x_{2n+1}) \right|. \]
 Similarly, the global maximal operator $\mathcal M_{\H^n}$ is defined by taking supremum over $t>0$. As in the Euclidean case,  $L^p$ boundedness of $M_{\H^n}$  is essentially equivalent to 
 that of $\mathcal M_{\H^n}$ (for example, see \cite{BGHS} or Section \ref{g-max}).  The spherical maximal operator on $\H^n$ was first studied  by Nevo and Thangavelu \cite{NeT}. 
  It is easy to see that $ M_{\H^n}$  is bounded on $L^p$ only if $p>\frac{2n}{2n-1}$  by using Stein's  example  (\cite{Stein2})
\[ f(x,x_{2n+1})=\vert x\vert^{1-2n}  \log \frac{1}{\vert x\vert} \,\phi_0(x,x_{2n+1}) \]
for a suitable cutoff function $\phi_0$ supported near the origin. For $n\geq 2$,  $L^p$ boundedness of  $ \mathcal M_{\H^n}$ on the optimal range  was  independently proved by M\"uller and Seeger \cite{MS},  and by Narayanan and Thangavelu \cite{NaT}.  
Furthermore,   for $n\geq 2$,  Roos, Seeger and Srivastava\cite{RSS} recently obtained  the complete $L^p$--$L^q$ estimate for $M_{\H^n}$ except for some endpoint cases.
Also see \cite{K} for related results. 

However, the problem still remains open when $n=1$.   

\begin{defn} We say 
a function $f: \H^1\rightarrow \C$ is Heisenberg radial  if
$f(x,x_3)=f(Rx,x_3)$
for all $R\in{\rm SO}(2)$.
\end{defn}

\noindent 
Beltran, Guo, Hickman and Seeger \cite{BGHS} obtained  $L^p$ boundedness of   $M_{\H^1}$ on the Heisenberg radial functions for $p>2$. 
In the perspective of  the results concerning the local maximal operators (\cite{Schlag97, SS, L, RSS}), it  is natural to consider  $L^p$--$L^q$ estimate for $M_{\H^1}$.  
The main result of this paper  is the following which completely characterizes  $L^p$ improving property of $M_{\H^1}$ on Heisenberg radial function except for some borderline cases. 

\begin{thm}\label{main-thm}
Let $P_0=(0,0), P_1=(1/2,1/2),$ and $P_2=(3/7,2/7)$, and let $\mathbf{T}$ be the closed region bounded by the triangle $\Delta P_0P_1P_2$. 
Suppose $(1/p,1/q)\in \lbrace P_0 \rbrace\cup\mathbf{T}\setminus (\overline{P_1P_2}\cup \overline{P_0P_2})$.   Then, the estimate 
\Be 
\label{max-pq} \Vert M_{\H^1}f\Vert_q\lesssim \Vert f\Vert_{L^p} \Ee
holds for any Heisenberg radial  function $f$. Conversely, if $(1/p,1/q)\notin \mathbf{T}$, then the estimate fails.
\end{thm}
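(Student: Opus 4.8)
The plan is to use the Heisenberg radial symmetry to collapse $M_{\H^1}$ to a two–dimensional operator, and then, after a Fourier transform in the central variable, to analyze the resulting explicit oscillatory integral operator scale by scale. First I would observe that $M_{\H^1}$ preserves Heisenberg radiality, so that writing $f(x,x_3)=g(|x|,x_3)$ and using rotation invariance to take $x=(r,0)$ with $r=|x|$ (whence $x\cdot Ay=-r\sin\theta$ for $y=(\cos\theta,\sin\theta)$), one has
\[
M_{\H^1}f(x,x_3)=\sup_{1<t<2}\big|A_t g(r,x_3)\big|,\qquad A_t g(r,s)=\int_0^{2\pi} g\big(\sqrt{r^2+t^2-2rt\cos\theta}\,,\,s+tr\sin\theta\big)\,d\theta .
\]
Since $\|f\|_{L^p(\H^1)}\sim\|g\|_{L^p(r\,dr\,ds)}$, the estimate \eqref{max-pq} is equivalent to a weighted $L^p$--$L^q$ bound for $\sup_{1<t<2}|A_tg|$ on the half–plane. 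A dyadic decomposition in $r$ together with the parabolic dilations of $\H^1$ reduces matters to $g$ supported in $r\sim1$; the ranges $r\ll1$ and $r\gg1$, where the geometry degenerates to an essentially one–dimensional average, are treated separately and hold on a strictly larger range of $(p,q)$.

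Next I would take the Fourier transform in $s$ and substitute $u=\sqrt{r^2+t^2-2rt\cos\theta}$, which turns $A_t$ into the explicit operator
\[
\widehat{A_tg}(r,\lambda)=2\int_{|r-t|}^{r+t}\frac{u}{\Phi(r,t,u)}\,\cos\!\big(\lambda\,\Phi(r,t,u)\big)\,\widehat g(u,\lambda)\,du ,
\]
where $\lambda$ is dual to $s$ and $\Phi(r,t,u)$ is twice the area of the triangle with side lengths $r,t,u$, i.e. $\Phi^2=r^2t^2-\tfrac14(r^2+t^2-u^2)^2$. The kernel has integrable square–root singularities at the two degeneracies $u=|r-t|$ and $u=r+t$ and an oscillatory factor of frequency $|\lambda|\Phi$; suppressing this factor gives exactly the circular average of a radial function on $\R^2$, so $\cos(\lambda\Phi)$ is precisely what yields the improvement over the Euclidean triangle. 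Since $\partial_t\Phi=t(r^2-t^2+u^2)/(2\Phi)$ and $\partial_t^2\Phi=-t^2/\Phi$ at a critical point in $t$, the phase $\Phi(\cdot,t,\cdot)$ has only non–degenerate critical points in $t$ (the area being maximized when the sides $r$ and $u$ are orthogonal): the $t$–dependence defines a non–degenerate Fourier integral operator of the type governing the wave equation in the plane.

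The heart of the proof would be a decomposition into dyadic pieces $|\lambda|\sim2^k$ (via Littlewood–Paley in $s$) and, within each, into dyadic pieces according to the two distances $r+t-u$ and $u-|r-t|$ to the degeneracies. On a given piece I would argue as follows: (i) if $|\lambda|\Phi\lesssim1$ there, discard the oscillation and invoke the sharp $L^p$–improving and local smoothing estimates for the Euclidean circular means of radial functions, gaining a power from the smallness of the relevant $u$–interval; (ii) if $|\lambda|\Phi\sim2^j\gg1$, use stationary phase / integration by parts in $u$ and $r$ to obtain a fixed–time $L^p$--$L^q$ bound with a genuine power gain in $2^k$, and then upgrade it to the maximal function by a local smoothing estimate for the non–degenerate phase $\Phi$ in the variable $t$, so that the full $1/q$ derivative can be recovered. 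Summing over $k$, $j$, and the two degeneracy scales and interpolating would give \eqref{max-pq} on $\{P_0\}\cup\mathbf T\setminus(\overline{P_1P_2}\cup\overline{P_0P_2})$; in particular this reproves, by more elementary means, the $L^p$ bound of Beltran et al.\ when $p=q>2$. I expect step (ii) to be the principal obstacle: one needs the oscillatory fixed–time $L^p$--$L^q$ estimate at the sharp exponent together with a maximal upgrade that is simultaneously sharp along the edges $\overline{P_1P_2}$ and $\overline{P_0P_2}$, and it is the meeting of these two constraints that produces the vertex $P_2$.

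Finally I would establish the necessity of $(1/p,1/q)\in\mathbf T$ by testing \eqref{max-pq} against three families of functions: Stein's example $|x|^{-1}\log(1/|x|)\,\phi_0$, which forces $p\ge2$ and hence the constraint carried by the edge $\overline{P_0P_1}$; a Knapp–type function supported on a thin plate adapted to the flat directions of the Heisenberg sphere, producing the slope–$3$ constraint $1/q\le 3/p-1$ of the edge $\overline{P_1P_2}$; and a spread–out focusing example whose governing scale ratio is dictated by the parabolic dilation $t(x,x_3)=(tx,t^2x_3)$, producing the slope–$2/3$ constraint $1/q\le\tfrac23\cdot\tfrac1p$ of the edge $\overline{P_0P_2}$. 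The vertex $P_2=(3/7,2/7)$ is exactly the intersection of the last two lines, which explains the appearance of the denominators $7$.
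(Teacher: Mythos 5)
Your proposal is a genuinely different route from the paper's, but it contains a gap at exactly the point you flag as the principal obstacle, and that gap is not incidental: it is the very difficulty the paper is designed to bypass. You set $f(x,x_3)=g(|x|,x_3)$, Fourier transform only in the central variable, and obtain a kernel with the oscillatory factor $\cos(\lambda\Phi(r,t,u))$, where $\Phi$ is twice the area of the triangle with sides $r,t,u$. The kernel identity is correct, and $\Phi$ is a variable--coefficient Fourier--integral phase in $(r,x_3,t)$. To close step (ii) you would need a sharp $L^p$--$L^q$ local smoothing estimate for this phase. But this phase has vanishing rotational and cinematic curvature along subvarieties (near $r=t$, the same degeneracy that makes the Jacobian vanish in the paper), which is precisely why Beltran--Guo--Hickman--Seeger had to develop machinery for oscillatory integrals with two--sided fold singularities and variable--coefficient local smoothing. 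Your sketch reproduces that obstacle rather than avoiding it.

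The paper's decisive move, which your proposal does not make, is to substitute $s=|x|^2/2$ (not $r=|x|$): writing $f(x,x_3)=g(|x|^2/2,x_3)$ turns the Heisenberg average into the \emph{exact Euclidean} circular mean, $f\astt d\sigma_t(r,0,x_3)=g* d\sigma_{tr}\big(\tfrac{r^2+t^2}{2},x_3\big)$. After the further change of variables $(r,x_3,t)\mapsto\big(\tfrac{r^2+t^2}{2},x_3,rt\big)$, with Jacobian $r^2-t^2$, the phase becomes literally $y\cdot\xi+\tau|\xi|$, and the needed local smoothing bound is the known constant--coefficient one (with the Guth--Wang--Zhang $L^4$ endpoint). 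The degeneracy of the Jacobian at $r=t$ then becomes a bounded, explicit loss that the paper handles by an angular decomposition of the cone (the parameter $m$ in $\psi_m$) together with a thin--slab $L^2$ gain (Lemma~\ref{thin L2 estimate}). In your variables this corresponds to the two endpoint degeneracies of the $u$--integral, but without the quadratic substitution the reduction to the constant--coefficient wave operator is lost, and no substitute is provided. Unless you can prove the required variable--coefficient local smoothing estimate for $\Phi$ directly, the proposal does not give a proof.

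Two smaller points. First, your remark that dyadic decomposition in $r$ plus the parabolic dilations ``reduces matters to $g$ supported in $r\sim1$'' is too strong: $L^p$--$L^q$ improving estimates are not dilation invariant, and the paper must run the argument at every scale $r\sim 2^k$, using the gains in $k$ only to make the sum converge; the ranges for $|k|\ge 2$ are indeed larger, but they are not obtained by reduction to $r\sim1$. Second, in the necessity argument you attribute the edge $\overline{P_0P_1}$ to Stein's example, but that edge is the line $p=q$; Stein's example gives $p\ge 2$ (which is automatic inside $\mathbf T$), not $p\le q$. The paper obtains $p\le q$ by testing against the indicator of a large ball. Your Knapp--type and focusing examples for the edges $\overline{P_1P_2}$ and $\overline{P_0P_2}$ match the paper's small--ball and thin--annulus examples in spirit and would give the correct slopes.
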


Though the Heisenberg radial assumption simplifies the structure of the averaging operator significantly, the associated defining function of the averaging operator  is still lacking of curvature properties. In fact, the defining function has vanishing rotational and  cinematic curvatures at some points, see  \cite{BGHS} for detailed discussion.  This increases  complexity of the problem.  To overcome the issue of vanishing curvatures,   Beltran, et al. \cite{BGHS}  used the oscillatory integral operators with two-sided fold singularities and the variable coefficient version of local smoothing estimate (\cite{BHS}) combined with  additional localization.

The approach in this paper  is quite different from that in \cite{BGHS}.  Capitalizing on the Heisenberg radial assumption, we make a change of variables so that the averaging operator on the Heisenberg radial function takes a form close to the circular average, see \eqref{def-A1} below.  While the defining function of the consequent operator  still does not have nonvanishing  rotational and  cinematic curvatures, via a further change of variables we can apply the $L^p$--$L^q$ local  smoothing estimate (see,  Theorem  \ref{local-pq} below) in a more straightforward manner by exploiting the apparent connection to the wave operator (see \eqref{def-A} and  \eqref{def-A2}). Consequently, our approach  also
provides a simplified proof of the result in \cite{BGHS}.  See Section  \ref{g-max}. 

Even though we use the local smoothing estimate, we do not need to use the  full strength of the local smoothing estimate  in $d=2$ since we only need the 
sharp $L^p$--$L^q$ local smoothing estimates for $(p,q)$ near $(7/3, 7/2)$.  Such  estimates can also be obtained by interpolation and  scaling argument 
if one uses the trilinear restriction estimates for the cone and the sharp local smoothing estimate for some large $p$ (for example, see \cite{LV}). 

We close the introduction showing the necessity part of  Theorem \ref{main-thm}.

\noindent{\it Optimality of $p,q$ range.} 
We show
\eqref{max-pq} implies   $(1/p,1/q)\in \mathbf{T}$, that is to say, 
\[  {\bf (a)}\  p\leq q ,  \  \  \   {\bf (b)} \ 1+1/{q}\geq {3}/{p} , \  \  \  {\bf (c)}\ {3}/{q}\geq {2}/{p}.\]
To see  ${\bf (a)}$,  let $f_R$ be the characteristic function of a ball of radius $R\gg1$, centered at $0$. Then, $M_{\H^1}f_R$ is also supported in a ball $B$ of radius $\sim R$ and 
$M_{\H^1}f_R\gtrsim 1$ on $B$.  Thus, $\sup_{R>1}{\Vert M_{\H^1}f_R\Vert_q}/{\Vert f_R\Vert_p}$ is finite  only if $p\leq q$.
For ${\bf (b)}$  let $g_r$ be the characteristic function of a ball of radius $r\ll 1$ centered at $0$. Then,  $\vert M_{\H^1}g_r(x,x_3) \vert \gtrsim r$ when  $(x,x_3)$ is contained in a $c_0r-$neighborhood of $\lbrace (x,x_3):1<\vert x\vert<2, x_3=0 \rbrace$ for a small constant $c_0>0$. Thus, \eqref{max-pq} implies 
$ r^{1+{1}/{q}}\lesssim r^{{3}/{p}}$,
which gives $1+{1}/{q}\geq {3}/{p}$ if we let $r\to 0$.
Finally, to show ${\bf (c)}$ we consider  $h_r$ which is the characteristic function of an $r-$neighborhood of
$\lbrace (x,x_3):\vert x\vert =1, x_3=0 \rbrace$ with $r\ll 1$. Then, $\vert M_{\H^1}h_r(x,x_3)\vert \gtrsim c>0$ when $(x,x_3)$ is in an $r-$ball centered at $0$. Thus, \eqref{max-pq}  gives 
$r^{3/{q}}\lesssim r^{2/{p}}$, 
which gives $3/{q}\geq 2/{p}$.

The maximal estimate \eqref{max-pq} for general $L^p$ functions  has a smaller range of $p,q$. Let $h_r$ be a characteristic function of the set $\lbrace (x,x_3) : |x_1-1|<r^2, |x_2|<r,|x_3|<r \rbrace$  
for a sufficiently small $r>0$.  Then $M_{\H^1}h_r (x,x_3)\sim r$  if  $-1\leq x_1\leq 0, |x_2|< cr, |x_3|<cr$ 
for a small constant  $c>0$ independent of $r$. Thus, \eqref{max-pq} implies  $r^{1+2/{q}}\lesssim  r^{{4}/{p}}$. 
It seems to be plausible to conjecture that  \eqref{max-pq} holds for general $f$ as long as  $1+ {2}/{q}-4/{p}\ge 0$, $3/{q}\ge {2}/{p}$, and $1/{q}\leq1/{p}$. So, the range of $p,q$ is properly contained in  $\mathbf{T}$.  

%
%
%
\section{Proof of Theorem \ref{main-thm}}
In this section we prove  Theorem \ref{main-thm} while assuming Proposition \ref{goal2} and Proposition  \ref{goal4} (see below), which we show in the next section.

\subsection{Heisenberg Radial function} 
Since $f$ is a Heisenberg radial  function,  we have $ f(x,x_3)=f_0(|x|,x_3)$ for some  $f_0$. Let 
us set 
\[ g(s,z)=f_0(\sqrt{2s}, x_3), \quad s\ge 0. \] 
Then, it follows  $f(x ,x_3)=g(|x|^2/{2},x_3)$. 
 Since  $f\astt d\sigma_t (r,0,x_3)=\int f(r-ty_1,-ty_2,x_3-try_2)d\sigma(y)=\int g(\frac{r^2+t^2}{2}-t ry_1,x_3-try_2)d\sigma(y)$,   we have
\begin{align}
\label{def-A1}
f\astt d\sigma_t (r,0,x_3) =  g\ast  d\sigma_{tr} \Big(\frac{r^2+t^2}{2},x_3\Big). 
 \end{align}
 Let us define an operator $\mathcal A_t$  by 
 \Be 
 \label{def-A}
 \mathcal A_t g(r,x_3)= \frac1{(2\pi)^2}\int_{\mathbb R^2} e^{i(\frac{r^2+t^2}{2}\xi_1+x_3\xi_2)} \widehat{d\sigma}(tr\xi)  \,  \widehat{g}(\xi) d\xi.
 \Ee
 Using Fourier inversion, we have
 \Be
 \label{def-A2} f\astt d\sigma_t (r,0,x_3)=\mathcal A_t g(r,x_3).
 \Ee
 Since  $f\astt d\sigma_t $ is also Heisenberg radial,\footnote{This is true because $\rm{SO}(2)$ is an abelian group. However,  $\rm{SO}(n)$ is not commutative in general, so 
 the property is  not valid in higher dimensions.} 
$ \Vert M_{\H^1}f\Vert_q^q = \int |M_{\H^1} f(r,0,x_3)|^q rdrdx_3. $ 
A computation shows $\|f\|_{L^p_{x, x_3}}=\|g\|_{L^p_{r,x_3}}$. 
Therefore, we see that  the estimate \eqref{max-pq} is equivalent to  
\Be  
\label{max-A} 
\big\| r^\frac1q \sup_{1<t<2} |\mathcal A_t g |  \big\|_{L^q_{r,x_3}}\le  C \|g\|_p. 
\Ee
\newcommand{\wchi}{\widetilde \phi} 
In what follows we show  \eqref{max-A} holds for 
$p,q$ satisfying 
\Be
\label{tri-con}
p\le q, \   3/p-1/q<1,  \  1/{p}+2/{q}> 1. 
\Ee
Then, interpolation with the trivial $L^\infty$ estimate proves Theorem \ref{main-thm}.

\subsection{Decomposition}
Let $\phi$ denote a positive smooth function on $\R$ supported in $[1-10^{-3}, 2	+10^{-3}]$ such that $\sum_{j=-\infty}^\infty \phi(s/2^j)=1$ for $s>0$.  We set  $\phi_j(s)=\phi(s/2^j)$. 
To show \eqref{max-A} we decompose  $\mathcal A_t$   as follows:
\[\mathcal A_t g(r,x_3)
=   \sum_{k\in\Z} \phi_k(r)  \mathcal   A_t g(r,x_3).\] 

 We decompose $g$ via the Littlewood-Paley decomposition and  
try to obtain estimates for each decomposed pieces.   For the purpose
 we  denote $\phi_{<\ell}=\sum_{j<\ell }\phi_j$ and $\phi_{\,\ge \ell}=\sum_{j\ge \ell }\phi_j$ and define 
 the projection operators 
\[  \widehat{\mathcal P_j g} (\xi):=\phi_j(|\xi|) \widehat g(\xi), \quad  \widehat{\mathcal P_{<j}  g} (\xi):=\phi_{<j}(|\xi|) \widehat g(\xi).
\] 

 Our proof of \eqref{max-A} mainly relies  on the following two propositions, which we prove in Section \ref{pf-pros}.

\begin{prop}\label{goal2}
Let $|k|\ge 2$ and $j\geq -k$. Suppose  
\Be 
\label{pq}
p\leq q, \  1/{p}+1/{q}\leq 1, \  1/{p}+3/{q}\geq 1. 
\Ee
 Then,  for $\epsilon>0$ we have
\begin{equation}\label{goal2 ineq}
\Big\Vert \sup_{1<t<2}  | \phi_k(r)\mathcal A_t  \pj g|  \Big \Vert_{L^q_{r,x_3}}
\lesssim 
\begin{cases}
2^{(j+k)(\frac{3}{2p}-\frac{1}{2q}-\frac{1}{2}+\epsilon)+\frac{k}{q}-\frac{2k}{p}}\Vert g\Vert_{L^p},  & \  \  \ k\ge 2,
\\[3pt]
2^{(j+k)(\frac{3}{2p}-\frac{1}{2q}-\frac{1}{2}+\epsilon)+\frac{2k}{q}-\frac{2k}{p}}\Vert g\Vert_{L^p}, & \  k< -2.
\end{cases}
\end{equation}
\end{prop}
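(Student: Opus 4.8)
The plan is to isolate the oscillatory part of $\widehat{d\sigma}$, to convert $\mathcal A_t$ on the dyadic blocks $r\sim 2^{k}$, $|\xi|\sim 2^{j}$ into a frequency-localized half-wave propagator on $\R^{2}$ over a time window of length $\sim 1$, and then to apply the $L^{p}$--$L^{q}$ local smoothing estimate of Theorem~\ref{local-pq} together with a Sobolev embedding in the dilation parameter $t$. For the first step, recall that $\widehat{d\sigma}$ coincides up to a constant with $J_{0}(|\cdot|)$, so for $|\eta|\ge 1$ one has $\widehat{d\sigma}(\eta)=\sum_{\pm}|\eta|^{-1/2}e^{\pm i|\eta|}a_{\pm}(|\eta|)$ with $a_{\pm}$ symbols of order $0$, while $\widehat{d\sigma}$ is smooth for $|\eta|\le 1$. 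On the support of $\phi_{k}(r)\phi_{j}(|\xi|)$ one has $|tr\xi|\sim 2^{j+k}$, and the standing hypothesis $j\ge -k$ forces $2^{j+k}\gtrsim 1$; hence only the oscillatory part of $\widehat{d\sigma}$ contributes, and matters reduce, for each sign, to bounding
\Be
\label{eq:prop-red}
2^{-(j+k)/2}\,\Big\|\,\sup_{1<t<2}\big|\,\phi_{k}(r)\,\mathcal A^{\pm}_{t}\mathcal P_{j}g\,\big|\,\Big\|_{L^{q}_{r,x_{3}}},
\Ee
where $\mathcal A^{\pm}_{t}$ carries the phase $\tfrac{r^{2}+t^{2}}{2}\xi_{1}+x_{3}\xi_{2}\pm tr|\xi|$ and an amplitude that, for each fixed $t$, is a symbol of order $0$ in $\xi$ adapted to $|\xi|\sim 2^{j}$ and supported in $r\sim 2^{k}$.

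Next I would perform the parabolic change of variables $u=r^{2}/2$. On $r\sim 2^{k}$ this is a diffeomorphism with Jacobian $\sim 2^{-k}$, so the $L^{q}_{r}$ norm turns into $2^{-k/q}$ times an $L^{q}_{u}$ norm over $u\sim 2^{2k}$. In the variables $(u,x_{3})$ the phase is $(u+t^{2}/2)\xi_{1}+x_{3}\xi_{2}\pm t\sqrt{2u}\,|\xi|$; writing $y=(u+t^{2}/2,x_{3})$ and $\tau=t\sqrt{2u}$, this is precisely the phase of the Euclidean half-wave propagator $e^{\pm i\tau\sqrt{-\Delta_{\R^{2}}}}$ composed with a zeroth order multiplier, which is the connection to the wave operator alluded to in the introduction. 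When $k\ge 2$ the quantity $u\sim 2^{2k}$ dominates $t^{2}/2$, whereas when $k\le -2$ it is dominated by $t^{2}/2$; in either case, after the appropriate parabolic rescaling of $(u,x_{3})$ and of the frequency variable --- which normalizes $\tau$ to size $\sim 1$ and the frequency to size $\mu:=2^{j+k}$, using the scaling invariance of the wave equation --- $\mathcal A^{\pm}_{t}\mathcal P_{j}$ turns into a frequency-$\mu$ localized half-wave propagator over a unit time window, composed with an $L^{p}$-bounded rescaled multiplier. Tracking the Jacobian of the change of variables and the homogeneity of these rescalings, together with the normalization of $\|g\|_{p}$, accounts for the remaining power of $2^{j+k}$ and for the $k$-dependent factors $2^{k/q-2k/p}$ (respectively $2^{2k/q-2k/p}$ when $k\le -2$, where the roles of $u$ and $t^{2}/2$ are reversed).

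It then remains to estimate the rescaled frequency-$\mu$ half-wave propagator $W_{t}$. For the supremum over the unit time window I would use the Sobolev embedding
\Be
\label{eq:sob-t}
\Big\|\sup_{t}|W_{t}h|\Big\|_{L^{q}_{x}}\lesssim \|W_{t_{0}}h\|_{L^{q}_{x}}+\|W_{t}h\|_{L^{q}_{t,x}}^{1-1/q}\,\big\|W_{t}(\sqrt{-\Delta}\,h)\big\|_{L^{q}_{t,x}}^{1/q},
\Ee
where $\sqrt{-\Delta}$ contributes an extra factor $\mu$ on the frequency-$\mu$ piece. The fixed-time term is controlled by the fixed-time $L^{p}$--$L^{q}$ bound for $e^{\pm i\sqrt{-\Delta}}P_{\mu}$, and the two mixed-norm terms by Theorem~\ref{local-pq}; the hypotheses \eqref{pq} --- $p\le q$, $1/p+1/q\le 1$, and $1/p+3/q\ge 1$ --- are precisely the conditions guaranteeing that these inputs are in force and that the $\mu^{1/q}$ loss coming from $\sqrt{-\Delta}$ is absorbed, so that, after accounting for the $2^{-(j+k)/2}$ in \eqref{eq:prop-red}, the exponent of $2^{j+k}$ comes out to $\tfrac{3}{2p}-\tfrac1{2q}-\tfrac12+\epsilon$; the $\epsilon$ is inherited from the local smoothing estimate.

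The main obstacle is concealed in the change of variables: in the $(u,x_{3})$ coordinates the propagation time $\tau=t\sqrt{2u}$ is coupled to the base point $u$, so a priori $\mathcal A^{\pm}_{t}$ is a variable-coefficient Fourier integral operator --- indeed this coupling is what produces the degeneracies of rotational and cinematic curvature discussed in the introduction and treated in \cite{BGHS} through the variable-coefficient local smoothing of \cite{BHS}. One must check that, once restricted to a single block $r\sim 2^{k}$ and rescaled, this coupling is a uniformly controlled, smooth, strictly monotone perturbation of the time variable, and that the term $\tfrac{t^{2}}{2}\xi_{1}$ merely produces a $t$-dependent translation of the output --- which preserves all mixed $L^{q}_{t}L^{q}_{x}$ norms and whose $t$-derivative is of lower order than $\mu$ --- so that the constant-coefficient estimate of Theorem~\ref{local-pq} indeed suffices. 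What remains is the routine but careful bookkeeping of exponents through the rescalings and the verification that the two regimes $k\ge 2$ and $k\le -2$ yield the two displayed bounds.
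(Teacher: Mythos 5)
Your plan has the right ingredients --- asymptotic expansion of $\widehat{d\sigma}$, Sobolev embedding in the dilation parameter, and the $L^p$--$L^q$ local smoothing estimate for the half-wave propagator --- and your final exponent bookkeeping in fact reproduces the claimed bounds. But the step you single out at the end as ``the main obstacle'' is where the argument actually breaks down as you have set it up, and the paper's resolution is simpler and cleaner than the perturbative treatment you sketch.

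After the partial change of variables $u=r^2/2$, the phase $(u+t^2/2)\xi_1+x_3\xi_2 \pm t\sqrt{2u}\,|\xi|$ is indeed formally a wave phase in $(y_1,y_2,\tau)=(u+t^2/2,\,x_3,\,t\sqrt{2u})$, but $\tau$ and $y_1$ both depend on $u$ and $t$; in the original coordinates $(u,x_3,t)$ the operator is a variable-coefficient Fourier integral operator, and the constant-coefficient Theorem~\ref{local-pq} does not directly apply. Your proposal to view $\tau=t\sqrt{2u}$ as a ``uniformly controlled, smooth, strictly monotone perturbation of the time variable'' and the term $t^2\xi_1/2$ as a ``$t$-dependent translation of the output'' is essentially the variable-coefficient route of \cite{BGHS} (via \cite{BHS}), which is precisely what this paper is designed to avoid, and you do not carry out the verification you acknowledge is needed.

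The missing idea is to reverse the order of operations. First apply the $t$-Sobolev embedding (Lemma~\ref{sobo}), together with the Mikhlin bound $|t\xi_1+r|\xi||\lesssim 2^{j+k}$ (for $k\ge 0$; $\lesssim 2^j$ for $k<0$), to reduce everything to genuine mixed $L^q_{r,x_3,t}(\mathbb R^2\times[1,2])$ estimates for $\phi_k(r)\,\mathcal U_t\pj g$. \emph{Then} make the full three-variable change of variables
\[
(r,x_3,t)\ \longmapsto\ (y_1,y_2,\tau)=\Bigl(\tfrac{r^2+t^2}{2},\,x_3,\,rt\Bigr),
\]
whose Jacobian determinant is $r^2-t^2$, hence $\sim 2^{2k}$ when $k\ge 2$ and $\sim 1$ when $k\le -2$. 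This turns the $L^q_{r,x_3,t}$ norm of $\phi_k(r)\,\mathcal U_t\pj g$ directly into (a Jacobian and amplitude factor times) $\|e^{i\tau\sqrt{-\Delta}}\pj f\|_{L^q_{y,\tau}(\mathbb R^2\times[2^{k-1},2^{k+2}])}$ --- a genuine constant-coefficient half-wave propagator over the time window $\tau\sim 2^k$, to which the rescaled local smoothing estimate of Lemma~\ref{scaled local smoothing} (with $\ell=k$) applies immediately. There is no residual coupling and no perturbation argument to check: the two cases $k\ge 2$ and $k\le -2$ differ only through the size of the Jacobian and the $\ell$-dependent factor in Lemma~\ref{scaled local smoothing}, which together produce the factors $2^{k/q-2k/p}$ and $2^{2k/q-2k/p}$ respectively. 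Finally, the non-oscillatory remainder $E_N$ of \eqref{asym} is not negligible by inspection but is controlled by the separate Lemma~\ref{lem:err}; this should be made explicit rather than dismissed with ``only the oscillatory part contributes.''
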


The estimate \eqref{goal2 ineq} continues to be valid for the  case  $k=-1,0,1$. However, 
the range of $p,q$ for which \eqref{goal2 ineq} holds  gets smaller.  

\begin{prop}\label{goal4}
Let $j\geq -1$ and  $k=-1,0,1$. Suppose  $p\leq q$, $1/{p}+1/{q}< 1$ and $1/{p}+2/{q}> 1$. Then,   for $\epsilon>0$ we have
\[ \Big\Vert \sup_{1<t<2}  | \phi_k(r)\mathcal A_t  \pj g|  \Big \Vert_{L^q_{r,x_3}} \lesssim   2^{ \frac{j}{2}(\frac{3}{p}-\frac{1}{q}-1)+\epsilon j}\Vert g\Vert_{L^p}. \]
\end{prop}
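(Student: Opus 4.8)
The plan is to reduce the maximal estimate near $r\sim 1$ to an $L^p$--$L^q$ local smoothing estimate for the two-dimensional wave operator, applied to the frequency-localized pieces $\pj g$. First I would fix $k\in\{-1,0,1\}$, so that $\phi_k(r)$ localizes $r$ to a compact interval bounded away from $0$ and $\infty$; on this region the change of variables implicit in \eqref{def-A} identifying $\tfrac{r^2+t^2}{2}$ as the "time" variable is a bi-Lipschitz map with bounded Jacobian, so the weight $r^{1/q}$ in \eqref{max-A} is harmless and can be absorbed. On the support of $\widehat{\pj g}$ we use the standard asymptotic expansion $\widehat{d\sigma}(\eta)=\sum_{\pm}e^{\pm i|\eta|}a_\pm(\eta)$ with $a_\pm$ a symbol of order $-1/2$, so that $\phi_k(r)\mathcal A_t\pj g$ becomes (up to harmless terms) a sum of two oscillatory integral operators whose phase is $\tfrac{r^2+t^2}{2}\xi_1+x_3\xi_2\pm tr|\xi|$. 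After the change of variables $u=\tfrac{r^2}{2}$ (valid since $r$ is away from $0$), this phase takes the form $u\xi_1+x_3\xi_2\pm t\sqrt{2u}\,|\xi|$, which for $t\in(1,2)$ and $u$ in a compact interval is — after a further smooth change of variables in the spatial variables — a nondegenerate phase of the type governing the half-wave propagator $e^{it\sqrt{-\Delta}}$ in $\R^2$ with an additional parameter.

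Next I would invoke the $L^p$--$L^q$ local smoothing estimate Theorem \ref{local-pq} (stated later in the excerpt): for $g$ with frequency support in $|\xi|\sim 2^j$, the estimate $\big\|\sup_{1<t<2}|\mathcal A_t\pj g|\big\|_q\lesssim 2^{j(\beta(p,q)+\epsilon)}\|g\|_p$ should hold, where $\beta(p,q)=\tfrac12(\tfrac3p-\tfrac1q-1)$ is exactly the exponent claimed. The mechanism is the usual one: pass from the fixed-time operator to the maximal operator over $t\in(1,2)$ by Sobolev embedding in $t$, i.e.\ $\|\sup_t|F(t,\cdot)|\|_q\lesssim \|F\|_{L^q_{x,t}}^{1-1/q}\|\partial_t F\|_{L^q_{x,t}}^{1/q}$ (or the cleaner Sobolev-in-$t$ version), where each $t$-derivative costs a factor $2^j$; this is absorbed into the $\epsilon$-loss once one has the local smoothing gain. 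The local smoothing input provides a gain of $\delta(p,q)$ derivatives over the fixed-time $L^p$--$L^q$ bound for the averaging operator, and combining the fixed-time bound (which follows from stationary phase / the $L^p$--$L^q$ mapping properties of the $-1/2$-order Fourier integral operator) with this gain yields precisely the exponent $\tfrac12(\tfrac3p-\tfrac1q-1)$. The hypotheses $p\le q$, $1/p+1/q<1$, and $1/p+2/q>1$ are exactly the conditions under which the relevant point $(1/p,1/q)$ lies in the open region where the sharp planar local smoothing estimate (equivalently, the cone restriction/square function estimate of Guth--Wang--Zhang level, or the weaker trilinear-restriction-plus-scaling version mentioned in the introduction) is available.

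The main obstacle I anticipate is the lack of genuine nondegeneracy of the phase: as the authors emphasize, even after the change of variables the defining function does not have nonvanishing rotational and cinematic curvature everywhere, so one cannot directly cite a clean Fourier integral operator theorem. I would handle this by the two-step change of variables advertised around \eqref{def-A2}: the first change $s=r^2/2$ aligns the operator with the circular average, and the second one (in the spatial frequency variables, exploiting the explicit dependence on $|\xi|$) straightens the phase into wave-operator form on the region $k=-1,0,1$ where $r$ stays in a fixed compact set, so that the degeneracies visible in the original Heisenberg coordinates are not seen. One must check that the symbol $a_\pm$, after these changes of variables and after inserting the cutoffs $\phi_k$ and $\phi_j(|\xi|)$, remains a symbol of order $-1/2$ with derivative bounds uniform in $t\in(1,2)$ and in $j$ (with the $2^j$ scaling accounted for) — this is a routine but slightly tedious symbol computation. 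A secondary technical point is the passage from the fixed-$t$ local smoothing estimate to the $t$-maximal estimate; since $t$ ranges over the compact interval $(1,2)$ and each $t$-derivative of $\mathcal A_t\pj g$ gains a factor $2^j$, the Sobolev embedding in $t$ costs only $2^{\epsilon j}$, which is already absorbed in the stated $\epsilon$-loss, so this step is harmless.
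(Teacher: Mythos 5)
Your proposal has a genuine gap that goes to the heart of why Proposition \ref{goal4} is harder than Proposition \ref{goal2}: you assert that on the region $k\in\{-1,0,1\}$ the change of variables $(r,x_3,t)\mapsto(y_1,y_2,\tau)=(\tfrac{r^2+t^2}{2},x_3,rt)$ is ``a bi-Lipschitz map with bounded Jacobian,'' but this is false. By \eqref{jacobian} the Jacobian is $r^2-t^2$, and for $r\in\supp\phi_k$ with $|k|\le 1$ and $t\in[1,2]$ the set $\{r=t\}$ sits inside the domain, so the Jacobian vanishes there. This degeneracy is precisely what the paper flags at the start of Section \ref{singular section}, and it cannot be ``straightened away'' by a further change of variables as you suggest --- your proposed argument would, as you sketch it, prove a bound on the same range \eqref{pq} as Proposition \ref{goal2}, yet Proposition \ref{goal4} holds only on the strictly smaller open region $1/p+1/q<1$, $1/p+2/q>1$. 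That shrinkage is not a manifestation of local smoothing; it is a bona fide loss forced by the degenerate Jacobian, and your plan has no mechanism that produces it.

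The paper's proof addresses the vanishing Jacobian with three ingredients absent from your proposal: a dyadic decomposition in $|r-t|$ (the cutoffs $\phi_{k,l}(r,t)=\phi_k(r)\phi(2^l|r-t|)$), a conic/angular frequency decomposition (the cutoffs $\psi_m$ tracking $||\xi|^{-1}\xi_1+1|\sim 2^{-m}$, which also controls the size of the multiplier $t\xi_1+r|\xi|$ needed for the Sobolev-in-$t$ step), and an improved $L^2$ estimate on the thin slab $S_l$ near the degeneracy (Lemma \ref{thin L2 estimate}, proved by Plancherel after the linear substitution $\eta=(\xi_1+|\xi|,\xi_2)$). These are then combined through Propositions \ref{singular local smoothing} and \ref{l>j/2 est} and a five-part sum $S_1,\dots,S_5$ over the ranges of $l$ and $m$, with the constraints $1/p+1/q<1$ and $1/p+2/q>1$ emerging from the summability in $m$ and $l$ respectively. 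Your outline, which reduces directly to Theorem \ref{local-pq} after a single change of variables, skips all of this and would break at the step where you need the Jacobian to be bounded below.
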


We frequently use the following elementary lemma (for example,  see \cite{L}) which plays the role of the Sobolev imbedding theorem.

\begin{lem}\label{sobo}
Let $I$ be an interval and let $F$ be a smooth function defined on $\mathbb{R}^n\times I$. Then, for $1\le p\leq \infty$,
\[ \Big\Vert\sup_{t\in I}\vert F(x,t)\vert\Big\Vert_{L^p(\mathbb{R}^n)}\lesssim   {|I|^{-\frac1p}}\Vert F\Vert_{L^p(\mathbb{R}^n\times I)}+\Vert F\Vert_{L^p(\mathbb{R}^n\times I)}^{\frac{(p-1)}{p}}\Vert\partial_tF\Vert_{L^p(\mathbb{R}^n\times I)}^{\frac{1}{p}}. \]
\end{lem}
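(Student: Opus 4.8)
The plan is to reduce the estimate to the one-dimensional fundamental theorem of calculus applied, for each fixed $x\in\mathbb{R}^n$, to the function $t\mapsto |F(x,t)|^p$, followed by averaging in the reference time and one application of H\"older's inequality. First I would fix $x$ and a reference point $t_0\in I$. For any $t\in I$, writing
\[
|F(x,t)|^p=|F(x,t_0)|^p+\int_{t_0}^{t}\partial_s\big(|F(x,s)|^p\big)\,ds
\]
and using the pointwise bound $\big|\partial_s|F(x,s)|^p\big|\le p\,|F(x,s)|^{p-1}\,|\partial_s F(x,s)|$, one obtains
\[
\sup_{t\in I}|F(x,t)|^p\le |F(x,t_0)|^p+p\int_I |F(x,s)|^{p-1}\,|\partial_s F(x,s)|\,ds .
\]

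Next I would average this inequality in $t_0$ over $I$, so that the first term on the right becomes $|I|^{-1}\int_I|F(x,t_0)|^p\,dt_0$, and then integrate in $x$ over $\mathbb{R}^n$. The first resulting term is exactly $|I|^{-1}\|F\|_{L^p(\mathbb{R}^n\times I)}^p$. To the second term I would apply H\"older's inequality in the variables $(x,s)\in\mathbb{R}^n\times I$ with exponents $p/(p-1)$ and $p$, which bounds it by $p\,\|F\|_{L^p(\mathbb{R}^n\times I)}^{p-1}\,\|\partial_s F\|_{L^p(\mathbb{R}^n\times I)}$. Taking $p$-th roots and using the subadditivity of $s\mapsto s^{1/p}$ (and absorbing the harmless uniformly bounded factor $p^{1/p}$ into the implicit constant) yields the claimed inequality; the endpoint $p=\infty$ follows directly from the fundamental theorem of calculus without the averaging step, and for $p=1$ the second term on the right is simply $\|\partial_s F\|_{L^1(\mathbb{R}^n\times I)}$.

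The only point that requires a little care — and the closest thing to an obstacle in an otherwise routine argument — is the differentiation of $|F|^p$ when $F$ is complex-valued and may vanish. I would handle this either by working on the open set $\{F\neq 0\}$, where $\partial_s|F|=\operatorname{Re}(\overline F\,\partial_s F)/|F|$ and hence $\big|\partial_s|F|\big|\le|\partial_s F|$, the contribution of the zero set being negligible; or by replacing $|F|$ with the smooth quantity $(|F|^2+\delta^2)^{1/2}$, running the argument with this substitute, and letting $\delta\to0$ by monotone/dominated convergence. With this caveat the proof is elementary.
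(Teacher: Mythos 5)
Your argument is correct and is the standard one for this kind of ``Sobolev-type'' maximal bound; the paper itself does not give a proof but simply cites \cite{L}, where the same fundamental-theorem-of-calculus plus H\"older argument appears. The chain of steps checks out: applying the FTC to $t\mapsto|F(x,t)|^p$, averaging over the base point $t_0\in I$, integrating in $x$, invoking H\"older with exponents $p/(p-1)$ and $p$ on the error term, and finally taking $p$-th roots and using subadditivity of $s\mapsto s^{1/p}$ gives exactly the stated bound with an absorbable constant $p^{1/p}\le e^{1/e}$. Your treatment of the endpoints and the caveat about differentiating $|F|^p$ (via restriction to $\{F\neq 0\}$ or the $(|F|^2+\delta^2)^{1/2}$ regularization) is sound; one could also sidestep the issue for $1\le p<\infty$ by applying the one-variable inequality to $G(t)=|F(x,t)|$ and then raising to the $p$-th power only at the end, but your route is equally valid.
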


\subsection{Proof of   \eqref{max-A}
}
We prove \eqref{max-A} considering the three cases  $  k\le -2,$ $  |k|\le 1,$ and $ k\ge 2$, 
  separately.  In fact, we make  use of the change of variables 
\eqref{chg} to apply the local smoothing estimate for the wave propagator (see Section \ref{ls}).
Since $1<t<2$,  $|\det\frac{\partial (y_1, y_2, \tau)}{\partial  (r,x_3,t)}|= |r^2-t^2|\sim \max(2^{2k}, 1)$ if $|k|\ge 2$. 
The cases  $|k|\ge 2$  can be handled in a rather straightforward manner. However, 
the Jacobian may vanishes when  $ |k|\le 1$, so the  map $(r,x_3,t)\to (y_1, y_2, \tau)$ becomes singular. 
This requires further decomposition away from the set $\{r=t\}$. See Section \ref{singular section}. 
This is why we separately consider the three cases.

\subsection*{Case $k\le - 2$}
We claim  that 
\Be 
\label{claim-2} 
\Big\Vert r^{\frac{1}{q}}  \sum_{k\le -2}  \sup_{1<t<2}  | \phi_k(r)\mathcal A_t  g| \Big\Vert_{L^q_{r,x_3}}\lesssim \Vert g\Vert_{L^p} \Ee 
holds provided that  $p,q$ satisfy $2/p<3/q$, $3/p-1/q<1$, and \eqref{pq}. 
Thus \eqref{claim-2} holds for $p,q$ satisfying \eqref{tri-con}.

Let us set $g_k=\mathcal P_{<-k}\,g$ and $g^k= g- \mathcal P_{< -k}g$ so that $g=g_k+g^k$. 
We break 
\Be\label{g-decomp}   \phi_k(r)\mathcal A_t  g=   \phi_k(r)\mathcal A_t g_k +  \phi_k(r)\mathcal A_t  g^k.  \Ee
We first consider $\phi_k(r)\mathcal A_t  g_k $. We shall show  that 
\Be 
\label{ho} \Big\Vert r^{\frac{1}{q}} \sup_{1<t<2}  | \phi_k(r)\mathcal A_t g_k | \Big\Vert_{L^q_{r,x_3}} \lesssim 2^{\frac{3k}{q}-\frac{2k}{p}}\Vert g\Vert_{L^p}
\Ee
holds for $1\le p\le q\le \infty$.  We recall \eqref{def-A} and note that  $\partial_t(\widehat{d\sigma}(tr\xi))$ is uniformly bounded because $|r\xi|\lesssim 1$. 
Since $\supp \widehat{g_k }\subset\{\xi: |\xi|\le C2^{-k}\}$ and $\partial_t e^{\frac{r^2+t^2}{2}\xi_1}=t\xi_1 e^{\frac{r^2+t^2}{2}\xi_1}$,  we have $ \| \phi_k(r)  \partial_t  \mathcal A_t g_k\|_q\lesssim  2^{-k}\| \phi_k(r)\mathcal A_t  g_k\|_q$ by the Mikhlin multiplier theorem. 
Applying Lemma \ref{sobo} to  $ \phi_k(r)\mathcal A_t  g_k$, we see that \eqref{ho} follows if we show
\Be 
\label{ho2}
 \Vert  \phi_k(r)\mathcal A_t  g_k \Vert_{L^q_{r,x_3,t}(\mathbb R^2\times [1,2])}\lesssim 2^{\frac{3k}{q}-\frac{2k}{p}}\Vert g\Vert_{L^p}.
  \Ee
  
 We now make use of the change   variables 
\Be
\label{chg}    (r,x_3,t)\to  (y_1, y_2, \tau):=\left(\frac{r^2+t^2}{2},x_3,rt  \right).  \Ee
Note that  
\Be 
\label{jacobian}
\det \frac{\partial (y_1, y_2, \tau)}{\partial  (r,x_3,t)}= r^2-t^2.
\Ee 
Since $k\le -2$ and $t\in [1,2]$, we have $|\det\frac{\partial (y_1, y_2, \tau)}{\partial  (r,x_3,t)}|\sim 1$.  Thus the left hand side of \eqref{ho2} is bounded by 
\[ 
 C\Big\Vert \phi_k(r(y_1, y_2, \tau))\int e^{iy\cdot \xi}\, \widehat{g}(\xi)\widehat{d\sigma}(\tau\xi)\phi_{<-k}(\xi)d\xi \Big\Vert_{L^q_{y,\tau}(\R^2\times [2^{-1},2^2])}. \]
Changing variables $\xi\to 2^{-k}\xi$ and $(y, \tau)\to (2^ky, 2^k\tau)$ gives
\[  \Vert  \phi_k(r)\mathcal A_t  g_k \Vert_{L^q_{r,x_3,t}(\mathbb R^2\times [1,2])} \lesssim 2^{\frac{3k}{q}}\Big\Vert \int e^{i y\cdot \xi}\, \mathfrak m(\xi)  \widehat{g(2^k\cdot)} (\xi)  d\xi \Big\Vert_{L^q_{y,\tau}(\R^2\times [2^{-1},2^2])}, \]
where $\mathfrak m(\xi)= \widehat{d\sigma}( \tau\xi)\phi_{<0}(\xi)$. 
Since $\tau\sim 1$ and $\phi_{<0}(\xi)$ is a smooth function supported in the set $\lbrace \xi : |\xi|\lesssim 1 \rbrace$,  $\mathfrak m(\xi)$ is a  smooth multiplier whose derivatives are uniformly bounded.  
So, the multiplier operator  given  by $\mathfrak m$ is uniformly bounded from $L^p(\mathbb R^2)$ to $L^q(\mathbb R^2)$ for $\tau\in  [2^{-1},2^2]$. Thus, via scaling 
we obtain \eqref{ho2} and, hence, \eqref{ho}.

Using  the triangle inequality and \eqref{ho}, we have 
\[ 
 \Big\Vert r^{\frac{1}{q}}  \sup_{1<t<2}  \sum_{k\le -2}  |  \phi_k(r)\mathcal A_t  g_k | \Big\Vert_{L^q_{r,x_3}} \lesssim  
\Big(\sum_{k\le -  2} 2^{\frac{3k}{q}-\frac{2k}{p}}\Big )  \|g\|_p \lesssim \|g\|_p
\]
because $2/p<3/q$. We now consider $\phi_k(r)\mathcal A_t  g^k$ for which we use Proposition \ref{goal2}.  Since 
\[   \Big\Vert r^{\frac{1}{q}}  \sup_{1<t<2}  \sum_{k\le -2} |  \phi_k(r)\mathcal A_t  g^k | \Big\Vert_{L^q_{r,x_3}}   \le   \sum_{k\le -2} \sum_{j\geq -k}\Big\Vert r^{\frac{1}{q}}  \sup_{1<t<2}  | \phi_k(r)\mathcal A_t  \pj g | \Big\Vert_{L^q_{r,x_3}} \] 
and since $p,q$ satisfy  $3/p-1/q<1$, $2/p<3/q$, and \eqref{pq},
using the estimate \eqref{goal2 ineq}, we get 
\[
 \Big\Vert r^{\frac{1}{q}}  \sup_{1<t<2}   \sum_{k\le -2} |  \phi_k(r)\mathcal A_t  g^k | \Big\Vert_{L^q_{r,x_3}} \lesssim  
\Big(\sum_{k\le -  2} 2^{\frac{3k}{q}-\frac{2k}{p}}\Big )  \|g\|_p \lesssim \|g\|_p.  \]
Combining this with the above estimate  for $g\to \phi_k(r)\mathcal A_t  g^k$  gives \eqref{claim-2} and this  proves the claim.

\subsection*{Case $k\ge 2$}  
In this case we show
\Be
\label{claim-3} 
\Big\Vert r^{\frac{1}{q}}  \sum_{k\ge 2}  \sup_{1<t<2}  | \phi_k(r)\mathcal A_t  g| \Big\Vert_{L^q_{r,x_3}}\lesssim \Vert g\Vert_{L^p} 
\Ee
if $p\le q$, $3/p-1/q<1$, and \eqref{pq} holds. So,  we have \eqref{claim-3} if \eqref{tri-con} holds.

In order to prove \eqref{claim-3}  we first prove the following.  

\begin{lem}
\label{lem:kernel} Let $k\ge -1$. If  $|t|\lesssim 1$ and $0\le s\lesssim 2^{2k}$,  then 
\Be  
\label{kernel}
|\mathcal A_t {\mathcal P}_{<-k}g | (\sqrt{2s}, x_3)\lesssim  \mathcal E^N_{k}\ast | g|(s, x_3), \Ee
where $ \mathcal E^N_\ell (y) =2^{-2\ell}(1+ 2^{-\ell}|y| )^{-N}.$
\end{lem}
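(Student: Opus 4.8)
The plan is to exploit the explicit formula \eqref{def-A} for $\mathcal A_t$ and the frequency localization $\supp\widehat{\mathcal P_{<-k}g}\subset\{|\xi|\lesssim 2^{-k}\}$ to write $\mathcal A_t\mathcal P_{<-k}g$ as convolution (in the $(s,x_3)$ variables) against a kernel $K_{t,k}$, and then show that $|K_{t,k}|\lesssim \mathcal E_k^N$ pointwise for every $N$. Concretely, from \eqref{def-A} and $\frac{r^2+t^2}{2}=s+\frac{t^2}{2}$ when $r=\sqrt{2s}$, we have
\[
\mathcal A_t\mathcal P_{<-k}g(\sqrt{2s},x_3)=\frac1{(2\pi)^2}\int_{\R^2}e^{i((s+t^2/2)\xi_1+x_3\xi_2)}\,\widehat{d\sigma}(tr\xi)\,\phi_{<-k}(|\xi|)\,\widehat g(\xi)\,d\xi,
\]
so the associated kernel is $K_{t,k}(s,x_3)=\frac1{(2\pi)^2}\int e^{i((s+t^2/2)\xi_1+x_3\xi_2)}\,\widehat{d\sigma}(tr(s)\xi)\,\phi_{<-k}(|\xi|)\,d\xi$, with $r(s)=\sqrt{2s}$ and $0\le s\lesssim 2^{2k}$, hence $r\lesssim 2^k$ and $|tr\xi|\lesssim 1$ on the support of $\phi_{<-k}$. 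On that region $\widehat{d\sigma}(tr\xi)$ is a smooth symbol all of whose derivatives in $\xi$ are bounded by $O(2^{kM})$ for the $M$-th derivative (each $\xi$-derivative costs a factor $tr\lesssim 2^k$), so the full amplitude $a(\xi)=\widehat{d\sigma}(tr\xi)\phi_{<-k}(|\xi|)$ satisfies $|\partial_\xi^\alpha a(\xi)|\lesssim_\alpha 2^{k|\alpha|}\mathbf{1}_{|\xi|\lesssim 2^{-k}}$.

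Next I would estimate $K_{t,k}$ by the standard non-stationary phase / integration-by-parts argument. Substituting $\xi=2^{-k}\eta$ gives $K_{t,k}(s,x_3)=2^{-2k}\frac1{(2\pi)^2}\int e^{i2^{-k}((s+t^2/2)\eta_1+x_3\eta_2)}\,a(2^{-k}\eta)\,d\eta$, where now $\eta\mapsto a(2^{-k}\eta)$ is supported in $|\eta|\lesssim 1$ with all derivatives $O(1)$. Writing $y=(s+t^2/2,x_3)$, repeated integration by parts in $\eta$ using the operator $(1+2^{-2k}|y|^2)^{-1}(1+2^{-k}y\cdot\nabla_\eta/i)$ yields $|K_{t,k}(s,x_3)|\lesssim_N 2^{-2k}(1+2^{-k}|y|)^{-N}$; since $|t^2/2|\lesssim 1$ this is $\lesssim_N 2^{-2k}(1+2^{-k}|(s,x_3)|)^{-N}=\mathcal E_k^N(s,x_3)$ after adjusting $N$ and using $(1+2^{-k}(|(s,x_3)|+O(1)))^{-N}\lesssim(1+2^{-k}|(s,x_3)|)^{-N}$ (valid because $2^{-k}\lesssim 1$, as $k\ge-1$). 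Then $|\mathcal A_t\mathcal P_{<-k}g(\sqrt{2s},x_3)|=|K_{t,k}\ast g(s,x_3)|\le\mathcal E_k^N\ast|g|(s,x_3)$, which is \eqref{kernel}.

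The only mildly delicate point — and the step I would be most careful about — is the bookkeeping of the $s$-dependence inside the amplitude: here $r=\sqrt{2s}$ depends on $s$, which is one of the \emph{convolution variables}, so $\widehat{d\sigma}(tr\xi)$ is not literally a multiplier in $(s,x_3)$. The clean fix is to freeze $s$: for each fixed admissible pair $(t,s)$ with $|t|\lesssim1$, $0\le s\lesssim 2^{2k}$, the operator $g\mapsto\mathcal A_t\mathcal P_{<-k}g(\sqrt{2s},\cdot)$ is convolution in $x_3$ (and a pointwise-in-$s$ Fourier multiplier in the $s$-frequency $\xi_1$) with kernel $K_{t,k}(s,\cdot)$ whose bound $\mathcal E_k^N(s,x_3)$ above is uniform in $(t,s)$ over the stated range; since the estimate \eqref{kernel} is a pointwise inequality in $(s,x_3)$, this uniformity is exactly what is needed and no genuine two-variable convolution structure is required. (One should also note $\phi_{<-k}$ is smooth and, for $s$ near $0$, the factor $e^{i(t^2/2)\xi_1}$ is harmless; the bound degenerates nowhere.) Everything else is the routine symbol estimate and integration by parts sketched above.
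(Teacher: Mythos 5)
Your proposal is correct and follows essentially the same route as the paper's own proof: both rewrite $\mathcal A_t\mathcal P_{<-k}g(\sqrt{2s},x_3)$ as an oscillatory integral against $g$, observe that (for fixed $t,s$ in the stated range) the amplitude $\phi_{<-k}(\xi)\widehat{d\sigma}(t\sqrt{2s}\xi)$ becomes a classical symbol with $O(1)$ derivatives after the rescaling $\xi\to2^{-k}\xi$, derive $|K|\lesssim\mathcal E^N_k$ by integration by parts, and absorb the $t^2/2$ shift using $\mathcal E^N_k(y_1+t^2/2,y_2)\lesssim\mathcal E^N_k(y_1,y_2)$ since $k\ge-1$. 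The ``delicate point'' you flag (the $s$-dependence of $r$ inside the amplitude) is handled in the paper by the same freezing-of-$s$ device, only left implicit there.
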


\begin{proof}
We note that
\begin{align*}
 \mathcal A_t  {\mathcal P}_{<-k}g(\sqrt{2s}, x_3)
& = K\ast g  \big(s+2^{-1}t^2,  x_3 \big ),
\end{align*}
where 
\[  K(y)=   \frac1{(2\pi)^2}\int e^{iy\cdot \xi}\phi_{<-k}(\xi)\widehat{d\sigma}(t\sqrt{2s}\xi) d\xi.\] 
We note  $\partial_{\xi}^\alpha [\phi_{<-k}(2^{-k}\xi)\widehat{d\sigma}(2^{-k} t\sqrt{2s}\xi) ]=O(1)$ since $s\lesssim 2^{2k}$. 
Thus, changing variables $\xi\to 2^{-k}\xi$, by integration by parts we have
$|K|\lesssim  \mathcal E^N_{k} $ for any $N>0$.  Since $|t|\lesssim 1$ and $k\ge -1$, we see 
$\mathcal E^N_{k} (y_1+2^{-1}t^2, y_2)\lesssim \mathcal E^N_{k} (y_1, y_2)$. 
  Therefore,  we get \eqref{kernel}. 
  \end{proof}

\begin{proof}[Proof of \eqref{claim-3}] 
We begin by observing a localization property of the  operator $\mathcal A_t$. 
From \eqref{def-A1} 
we note that 
\[  \frac{r^2+t^2}{2}-t ry_1 \subset I_k:= [2^{2k-1}( 1- 10^{-2}),2^{2k+1}( 1+10^{-2})]  \] 
for $r\in \supp \phi_k$   if $k$ is large enough, i.e., $2^{-k}\le    10^{-3}$. Thus,  from \eqref{def-A1}   and  \eqref{def-A2}  we see  that 
\Be
\label{localization}
  \phi_k(r) \mathcal A_t g(r,x_3)=   \phi_k(r)\mathcal A_t ([g]_k)(r,x_3) 
  \Ee
where $[g]_k(r,x_3)=\chi_{I_k}(r) g(r,x_3)$.   
Clearly,  the intervals ${I_k}$ are finitely overlapping  and  so are the supports of $\phi_k$. 
Since $p\le q$, by a standard localization argument  it is sufficient for  \eqref{claim-3} to show 
\Be 
\label{haa}  \Big\Vert r^{\frac{1}{q}}   \sup_{1<t<2}  | \phi_k(r)\mathcal A_t  g| \Big\Vert_{L^q_{r,x_3}}\lesssim \Vert g\Vert_{L^p}
\Ee
for $k\ge 2$.

Using the decomposition \eqref{g-decomp}, we first consider $ \phi_k(r)\mathcal A_t  g^k$.    
Since 
\Be
\label{tri}
  \Big\Vert r^{\frac{1}{q}}  \sup_{1<t<2}  |   \phi_k(r)\mathcal A_t  g^k | \Big\Vert_{L^q_{r,x_3}}   \le  \sum_{j\geq -k}\Big\Vert r^{\frac{1}{q}}  \sup_{1<t<2}  | \phi_k(r)\mathcal A_t  \pj g | \Big\Vert_{L^q_{r,x_3}}
 \Ee  and since 
$3/p-1/q<1$, $p\le q$, and \eqref{pq} holds,  
using the estimate \eqref{goal2 ineq}, we get 
\[\Big\Vert r^{\frac{1}{q}}  \sup_{1<t<2}  |  \phi_k(r)\mathcal A_t  g^k | \Big\Vert_{L^q_{r,x_3}} \lesssim  
 2^{\frac{2k}{q}-\frac{2k}{p}} \|g\|_p \lesssim \|g\|_p.  \]
 We now handle $ \phi_k(r)\mathcal A_t  g_k$.  
Changing variables $r\mapsto \sqrt{2s}$,  we have
\begin{align*}
\Big\Vert r^\frac1q \sup_{1<t<2} |\phi_k(r)\mathcal A_t  g_k |\Big\Vert_{L^q_{r,x_3}}^q
\lesssim \int \phi_k(\sqrt{2s}) \left(\sup_{1<t<2}  |\mathcal A_t  g_k(\sqrt{2s}, x_3) |  \right)^qdsdx_3.
\end{align*}
Since $1<t<2$, $k\ge 2$, and $ g_k=\mathcal P_{<-k} g$,  by Lemma \ref{lem:kernel}
$|\mathcal A_t  g_k(\sqrt{2s}, x_3) |\lesssim  \mathcal E^N_{k}\ast | g|(s, x_3)$. Hence, 
\[  \Big\Vert r^\frac1q \sup_{1<t<2} |\phi_k(r)\mathcal A_t  g_k |\Big\Vert_{L^q_{r,x_3}} \lesssim  \| \mathcal E^N_{k}\ast | g| \Vert_{L^q_{s,x_3}} \lesssim 2^{2k(1/q-1/p)} \|g\|_p \le \|g\|_p .\]
The second inequality follows by Young's convolution inequality and the third is clear because $k\ge 2$ and $p\le q$. Therefore, we get \eqref{haa}. 
\end{proof}

\subsection{Case $|k|\le 1$}  
To complete the proof of \eqref{max-A}, the matter is now reduced to obtaining  
\[
\Big\Vert  r^\frac1q \sup_{1<t<2}  | \phi_k(r)\mathcal A_t  g| \Big\Vert_{L^q_{r,x_3}}\lesssim \Vert g\Vert_{L^p}, \quad  k=-1,0,1 
\]
if $p,q$ satisfy  \eqref{tri-con}. 
 In order to show this we use Proposition \ref{goal4}.  Using the decomposition \eqref{g-decomp}, we first consider $ \phi_k(r)\mathcal A_t  g^k$.  
 Note that \eqref{pq} is satisfied  if \eqref{tri-con} holds. 
Since  $3/p-1/q<1$,  by \eqref{tri} and Proposition \ref{goal4} we see  
 \[\Big\Vert r^{\frac{1}{q}}  \sup_{1<t<2}  |  \phi_k(r)\mathcal A_t  g^k | \Big\Vert_{L^q_{r,x_3}} \lesssim  
  \sum_{j\ge -k} 2^{ \frac{j}{2}(\frac{3}{p}-\frac{1}{q}-1)+\epsilon j}\Vert g\Vert_{L^p}  \lesssim \|g\|_p\]
 taking a small enough $\epsilon>0$.  We now consider $ \phi_k(r)\mathcal A_t  g_k$. Since $1<t<2$ and $|k|\le 1$, by Lemma \ref{lem:kernel}
 we have $ \phi_k(r)|\mathcal A_t  g_k|\lesssim \mathcal E^N_0\ast |g|$. Hence, it follows that 
 \[\Big\Vert r^{\frac{1}{q}}  \sup_{1<t<2}  |  \phi_k(r)\mathcal A_t  g_k | \Big\Vert_{L^q_{r,x_3}}  
   \lesssim \|g\|_p\]
   for $1\le p\le q\le \infty$. Therefore we get the desired estimate. 

\subsection{Global maximal estimate}\label{g-max} 
 Using the estimates in this section, one can  provide a simpler proof of the result due to Beltran et al. \cite{BGHS}, i.e., 
 \Be\label{global} \| r^\frac1p \sup_{0<t<\infty} |\mathcal A_t g | \|_{L^p_{r,x_3}}\le  C \|g\|_p\Ee
 for $2<p\le \infty$.    In order to show this we use the following lemma which is a consequence of Proposition  
 \ref{goal2}  and  \ref{goal4}. 
 
 \begin{lem} 
 Let $2\le p\le 4$. Then, for some $c>0$ we have 
 \Be  
\label{max-A1} 
\big\| r^\frac1p \sup_{1<t<2} |\mathcal A_t \pj g |  \big\|_{L^p_{r,x_3}}\le  C 2^{-cj} \|g\|_p\, .
\Ee
 \end{lem}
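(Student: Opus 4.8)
The plan is to prove \eqref{max-A1} by a direct interpolation between the $L^p$--$L^q$ bounds of Propositions \ref{goal2} and \ref{goal4}, summed over the scale parameter $k$. First I would recall the localization structure: by \eqref{localization} and the finite overlap of the supports of $\phi_k$ together with the intervals $I_k$, the frequency-$2^j$ piece $\mathcal A_t\pj g$ splits as $\sum_k \phi_k(r)\mathcal A_t\pj g$ with essentially disjointly supported summands once $|k|$ is large, so for $p=q$ the $L^p$ norm of the sup is controlled (up to constants) by the $\ell^p$-sum, hence by the sup over $k$, of the individual pieces $\big\| r^{1/p}\sup_{1<t<2}|\phi_k(r)\mathcal A_t\pj g|\big\|_{L^p_{r,x_3}}$. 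For $\mathcal P_j$ with $j\ge 0$ fixed, only the ranges $k\ge -j$ (equivalently $j\ge -k$) are relevant, since for $k$ with $j<-k$ the piece $\mathcal A_t\pj g$ is handled by the kernel bound of Lemma \ref{lem:kernel} (it falls into the $\mathcal P_{<-k}$ regime), contributing a geometrically decaying, in fact exponentially small in $j$, term.

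Next I would plug $q=p$ into the exponents. In the regime $k\ge 2$, Proposition \ref{goal2} gives a bound $2^{(j+k)(\frac{3}{2p}-\frac{1}{2q}-\frac12+\epsilon)+\frac{k}{q}-\frac{2k}{p}}=2^{(j+k)(\frac1{p}-\frac12+\epsilon)-\frac{k}{p}}$; in the regime $k\le -2$ it gives $2^{(j+k)(\frac1{p}-\frac12+\epsilon)}$; and for $|k|\le 1$ Proposition \ref{goal4} gives $2^{\frac j2(\frac2p-1)+\epsilon j}$. Since $2<p\le 4$ we have $\frac1p-\frac12<0$, so each of these has a negative power of $2^j$ once $\epsilon$ is chosen small (note $\frac1p-\frac12\le -\frac14$ on this range, comfortably beating $\epsilon$). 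One then checks that summing the right-hand sides over the admissible $k$ converges: for $k\le -2$ the factor $2^{(j+k)(\frac1p-\frac12+\epsilon)}$ with $k\ge -j$ is summable in $k$ and bounded by $C2^{j(\frac1p-\frac12+\epsilon)}$; for $k\ge 2$ the extra $2^{-k/p}$ only helps; and the finitely many $|k|\le 1$ terms contribute $C2^{\frac j2(\frac2p-1)+\epsilon j}$. Taking the worst of these and absorbing $\epsilon$ yields \eqref{max-A1} with $c=\tfrac12-\tfrac1p-\epsilon>0$ (or any smaller positive constant, uniformly for $p$ in a compact subinterval of $(2,4]$; the endpoint $p=4$ gives $c$ close to $\tfrac14$).

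Before the interpolation step I must verify that Propositions \ref{goal2} and \ref{goal4} actually apply at $p=q$ in the stated range: for $p=q$ the hypothesis \eqref{pq} reads $1\le p\le\infty$ trivially for the first inequality, $2/p\le1$ i.e. $p\ge2$, and $4/p\ge1$ i.e. $p\le4$ — exactly $2\le p\le4$. Likewise Proposition \ref{goal4}'s hypotheses $2/p<1$ and $3/p>1$ become $2<p<3$, so for $3\le p\le 4$ one uses instead interpolation of the $|k|\le1$ piece: the $L^p$--$L^q$ bound of Proposition \ref{goal4} holds on an open set of $(1/p,1/q)$ containing points with $1/q<1/p$, and interpolating with the trivial $L^\infty$ bound (or simply using the $|k|\le1$ estimates already proven in the ``Case $|k|\le1$'' subsection, which give $\lesssim 2^{\frac j2(\frac3p-\frac1q-1)+\epsilon j}\|g\|_p$ for $(1/p,1/q)$ in the region \eqref{tri-con}) covers all of $2<p\le4$. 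The only mildly delicate point — and the step I expect to require the most care — is this bookkeeping at the junction $|k|\le 1$ together with the passage from the frequency-localized $L^p$--$L^q$ bounds to an honest $L^p$--$L^p$ bound via the finite-overlap localization of \eqref{localization}; once that is in place the sum over $k$ and the extraction of the exponential gain $2^{-cj}$ are routine geometric series.
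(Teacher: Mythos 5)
Your decomposition into the dyadic ranges of $k$ and the idea of specializing Propositions \ref{goal2} and \ref{goal4} to $q=p$ is the same strategy as the paper. However, there is a genuine computational gap: when you pass from Propositions \ref{goal2} and \ref{goal4} (which bound the \emph{unweighted} norm $\Vert\sup_t|\phi_k(r)\mathcal A_t\mathcal P_j g|\Vert_{L^q}$) to the quantity the Lemma actually needs, you must multiply by $r^{1/p}\sim 2^{k/p}$ on $\supp\phi_k$, and you drop this factor throughout. For $k\ge 2$ the omission is harmless (the $k$-sum still converges), but for the range $-j\le k\le -2$ it is fatal to the claimed decay. Indeed, with $q=p$ and the weight omitted, the bound you quote is $2^{(j+k)(\frac1p-\frac12+\epsilon)}$ and you assert that $\sum_{-j\le k\le -2}2^{(j+k)(\frac1p-\frac12+\epsilon)}\lesssim 2^{j(\frac1p-\frac12+\epsilon)}$. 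This is false: since $\frac1p-\frac12+\epsilon<0$ for $p>2$ and small $\epsilon$, setting $m=j+k\in\{0,\dots,j-2\}$ shows the sum $\sum_{m=0}^{j-2}2^{m(\frac1p-\frac12+\epsilon)}$ is a \emph{convergent} geometric series dominated by the $m=0$ (i.e.\ $k=-j$) term, so it is $\approx 1$, not $2^{-cj}$. Thus as written the $S_2$ piece gives no gain in $j$ and the Lemma does not follow.

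The gain in this range comes precisely from the weight you dropped: including $r^{1/p}\sim 2^{k/p}$, the correct per-$k$ bound is $2^{k/p}\cdot 2^{(j+k)(\frac1p-\frac12+\epsilon)}$, and substituting $m=j+k$ gives $2^{-j/p}\sum_{m=0}^{j-2}2^{m(\frac2p-\frac12+\epsilon)}$. For $2<p<4$ the inner exponent is positive, so the sum is $\sim 2^{j(\frac2p-\frac12+\epsilon)}$ and the total is $2^{j(\frac1p-\frac12+\epsilon)}$, which does decay. (For $p=4$ one gets an extra harmless factor of $j$, and for $p=2$ the decay is genuinely lost, consistent with the borderline nature of $p=2$.) So the final exponent you wrote down happens to be correct, but your derivation of it is not, and the weight is exactly the ingredient that makes the argument work. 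Two further points to tighten: the reduction ``$L^p$ norm is controlled by the $\ell^p$-sum, hence by the sup over $k$'' is not literally true (you need the geometric decay in $k$, which again relies on the weight); and for the $k\ge 2$ range, once $p>2$ the plain triangle inequality over $k$ already converges geometrically, so the $\chi_{I_k}$-localization subtlety you flag (which the paper handles by splitting $\mathcal A_t\mathcal P_j g=\mathcal A_t\mathcal P_j\chi_{I_k}g+\mathcal A_t\mathcal P_j(1-\chi_{I_k})g$ and showing the latter is $O(2^{-(k+j)N})$) is actually dispensable here, though you should say so explicitly rather than leave it as an unresolved ``delicate point.''
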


 \begin{proof}
 We briefly explain how one can show \eqref{max-A1}.   In fact,  similarly as before, we decompose 
\[ \mathcal A_t  \pj g= S_1+ S_3+ S_3+S_4,\]
where 
\[S_1:=  \sum_{k<-j}  \phi_k(r)\mathcal A_t \pj g, \quad  S_2:=  \sum_{-j\le k\le -2}  \phi_k(r)\mathcal A_t  \pj g, \quad  S_3:= \sum_{-1\le k\le 1}  \phi_k(r)\mathcal A_t \pj g, \] 
and $S_4= \mathcal A_t  \pj g - S_1-  S_2- S_3.$ 
Then, the estimate \eqref{max-A1} follows if we show  
$\| r^\frac1p \sup_{1<t<2} |S_\ell | \|_{L^p_{r,x_3}}\le  C 2^{-cj} \|g\|_p$, $\ell=1,2,3, 4$ for some $c>0$.   The estimate for $S_1$ follows from \eqref{ho} and summation over $k<-j$. 
Using the estimate of the second case in \eqref{goal2 ineq},  one can easily get the estimate for  $S_2$. 
The estimate for $S_3$  is obvious from Proposition  \ref{goal4}.  By Proposition \ref{goal2} combined with the localization property \eqref{localization}  we can obtain 
the estimate for $S_4$. However,  due to the projection operator $\pj$ we need to modify the previous  argument slightly. 

From \eqref{def-A1} and \eqref{def-A2} we see 
\begin{align}
\label{kj}
\mathcal A_t \pj g(r,x_3)=\iint  g(z_1, z_3)  
K_j\Big(\frac{r^2+t^2}{2}-z_1- t ry_1,x_3-z_2-try_2\Big)d\sigma(y) d z, 
 \end{align}
 where $K_j=\mathcal F^{-1}( \phi(2^{-j}|\cdot|)$. Note that $|K_j|\lesssim E_{-j}^N $  for any $N$ and $k\ge 2$. 
If $r\in \supp \phi_k$, $\sqrt{2z_1}\not\in  I_k $, and $k$ is large enough,  then  we have 
\[  \Big| K_j\Big( \frac{r^2+t^2}{2}-t ry_1 -z_1,x_3-try_2-z_2\Big)\Big|\lesssim  2^{-(2k+j)N} 
\Big(1+ 2^j|r^2-2z_1|+ 2^{-k}|x_3-z_2| \Big)^{-N}  \] 
for any $N$ since 
$|2^{-1}r^2-z_1|\gtrsim 2^{2k}$ and $|rty|\lesssim 2^k$. Hence it follows that 
\[ \| r^\frac1p \phi_k(r)\mathcal A_t \pj (1-\chi_{I_k})g\|_p\le C2^{-(k+j)N}  \|g\|_p, \quad 1\le p\le \infty\] 
for any $N$. We break $\mathcal A_t \pj g= \mathcal A_t \pj  \chi_{I_k}g+\mathcal A_t \pj  (1-\chi_{I_k})g$. Using the last inequality  and then  Proposition \ref{goal2},  we obtain  
\[ \| S_4\|_p\le  \Big( \sum_{k\ge  2  }     \| r^\frac1p \phi_k(r)\mathcal A_t \pj \chi_{I_k}g\|_p^p\Big)^\frac1p +  \sum_{k\ge  2  }  2^{-(k+j)N}  \|g\|_p \lesssim 2^{-cj} \|g\|_p \] 
for some $c>0$ by taking an $N$ large enough. 
 \end{proof}
 
    Once we have \eqref{max-A1}, using  a standard argument  which relies on the Littlewood-Paley decomposition and rescaling (for example, see \cite{B2, Schlag96, BGHS} ) 
one can easily show \eqref{global}. Indeed,  we break the maximal function into high and lower frequency parts:  
\[
 \sup_{0<t<\infty} |\mathcal A_t g|  \le  \mathcal A_{low\,} g  + \mathcal A_{high\,} g,
 \]
 where 
\begin{align*}
\mathcal A_{low\,} g &= \sup_l     \sup_{2^{l}\le t < 2^{l+1}} |\mathcal A_t \mathcal P_{<-2l} g|,
\\
\mathcal A_{high\,} g &=\sum_{k\ge 0} \sup_l     \sup_{2^{l}\le t < 2^{l+1}} |\mathcal A_t \mathcal P_{k-2l} g|.
\end{align*}

For $\mathcal A_{low\,} g$ we claim 
\Be \label{max-low}   \sup_{2^{l}\le t < 2^{l+1}} |\mathcal A_t \mathcal P_{<-2l} g(r, x_3)| \lesssim    \mathcal M_{\R^2}g(2^{-1}r^2,x_3). \Ee
This gives $\mathcal A_{low\,} g(r,x_3)\lesssim   \mathcal M_{\R^2}g(2^{-1}r^2,x_3)$. Since $\mathcal M_{\R^2}$ is bounded on $L^p$ for $p>2$,   for $2<p\le \infty$ we get 
\[\| r^\frac1p \mathcal A_{low\,} g \|_{L^p_{r,x_3}}\le  C \|g\|_p.\]
We now proceed to prove \eqref{max-low}. 
Note that $\sum_{j\le 2l}  \phi(2^{-j}|\cdot|)=\phi_{<1}(2^{2l}|\cdot|)$ and $\phi_{<1}$ is a smooth function supported on $[-2^2,2^2]$. Thus,  similarly as in  \eqref{kj} we note that
$\mathcal A_t \mathcal P_{<-2l} g(r,x_3) =  \iint  g(z_1, z_3)  
\widetilde {K}_l \ast d\sigma_{tr} ( 2^{-1}(r^2+t^2)-z_1,x_3-z_2)  d z $ where $\widetilde {K}_l=\mathcal F^{-1}( \phi_{<1}(2^{2l}|\cdot|))$. Since $\widetilde {K}_l\lesssim \mathcal E_{2l}^N$
for any $N$,   for $2^{l}\le t < 2^{l+1}$  we see 
   \Be 
   \label{att} |\mathcal A_t \mathcal P_{<-2l} g(r,x_3)|\lesssim  \Big|\int  | g(z_1, z_2) |
\mathcal E^{2N}_{2l} \ast d\sigma_{tr} \big(2^{-1}r^2- z_1,x_3-z_2\big) d z
\Ee
because $2^{2l}t^2\lesssim 1$ and $\mathcal E^{2N}_{2l}=2^{-4l}(1+ 2^{-2l}|y| )^{-2N}.$ Hence, taking an $N$ large enough,  we note  that  
\Be \mathcal E^{2N}_{2l} \ast d\sigma_{tr}(x) \lesssim   
\begin{cases}
\label{attt}
(2^{2l} tr)^{-1}  (1+   2^{-2l} ||x|- tr|)^{-N}, &    2^{2l}\ll   tr,
\\
2^{-4l} (1+   2^{-2l} |x|)^{-N}, &   2^{2l} \gtrsim   tr,
\end{cases}  
\Ee
provided that  $2^{l}\le t < 2^{l+1}$. Indeed, to show this we only have to consider the case $2^{2l}\ll tr$ since the other case is 
trivial. By scaling $x\to trx$  we may assume  that $tr=1$. Thus, it is enough  to show
$ \int L^{-2}(1+L^{-1}|x-y|)^{-2N}d\sigma(y)\lesssim L^{-1}(1+L^{-1}||x|-1|)^{-N}$
for $L\ll 1$ with an $N$ large enough. However, this is easy to see since  $|x-y|\ge ||x|-1|$ and  $\int L^{-1}(1+L^{-1}|x-y|)^{-N}d\sigma(y)\lesssim 1$.

 Therefore, combining \eqref{att} and \eqref{attt},  
one can  see
\[ \sup_{2^{l}\le t < 2^{l+1}}|\mathcal A_t \mathcal P_{<-2l} g(r,x_3)| \lesssim  M_{\R^2}g(2^{-1}r^2,x_3)  +  \mathfrak M_2g(2^{-1}r^2,x_3).\]
Here $\mathfrak M_2$ denotes the Hardy-Littlewood 
maximal function on $\mathbb R^2$. This proves the claim \eqref{max-low} since $\mathfrak M_2g\lesssim M_{\R^2}g$.  

So we are reduced to showing  $ \| r^\frac1p \mathcal A_{high\,} g \|_{L^p_{r,x_3}}\le  C \|g\|_p$ for $p>2$.  For the purpose it is  sufficient to show 
\Be
\label{hhh}  \|  \sup_{2^{l}\le t < 2^{l+1}} |\mathcal A_t \mathcal P_{k-2l} g|\|_p \lesssim 2^{-ck} \|g\|_p
\Ee
because  $\mathcal A_{high\,} g\le  \sum_{k\ge 0}  (\sum_l   | \sup_{2^{l}\le t < 2^{l+1}} |\mathcal A_t \mathcal P_{k-2l} g|^p)^{1/p}$ and $(\sum_l \| \mathcal P_{k-2l} g\|_p^p)^{1/p}\lesssim \|g\|_p.$   
By scaling, using \eqref{def-A}, we can easily see  the inequality \eqref{hhh} is equivalent to \eqref{max-A1} while $j$ replaced by $k$.  
 So,  we have \eqref{hhh} and this completes the proof of \eqref{global}.

\section{Proof of Proposition \ref{goal2}  and  \ref{goal4}}
\label{pf-pros}

In order to prove Proposition \ref{goal2}  and  \ref{goal4},  we are led  by  \eqref{def-A}  to consider  $\widehat{d\sigma}(tr\xi)$  
for which we use the following well known asymptotic expansion (see, for example,  \cite{Stein}):  
\Be
\label{asym}
\widehat{d\sigma}(\xi)=   \sum_{j=0}^NC^\pm_j   |\xi|^{-\frac12-j}     e^{ \pm i|\xi|}+ E_N( |\xi|),  \quad |\xi|\gtrsim 1
\Ee
where  $ E_N$ is a smooth function satisfying  
\Be 
\label{asym2}
| \frac {d^\ell}{dt^\ell}  E_N(r)|\lesssim r^{-N}
\Ee  for $0\le \ell\le 4$ if  $r\gtrsim 1$. 
The expansion \eqref{asym}  relates the operator $\mathcal A_t$ to the wave propagator.  
After changing variables, to prove Proposition  \ref{goal2}  and  \ref{goal4}    
we can use the local smoothing estimate for the wave operator (see Proposition \ref{local-pq} below).  

\smallskip

\subsection{Local smoothing estimate}
\label{ls}
 Let us denote 
\[  e^{it\sqrt{-\Delta}} f(x)=\frac1{(2\pi)^2}\int_{\mathbb R^2} e^{i(x\cdot\xi+t|\xi|)}\widehat{f}(\xi)d\xi.\] 
We make use of  $L^p$--$L^q$ local smoothing estimate for the wave equation in $\R^2$.

\begin{thm}
\label{local-pq} 
Let $j\ge 0$. Suppose \eqref{pq} holds. Then, for $\epsilon>0$ we have 
\Be
\label{local-smoothing} 
 \left\Vert    e^{it\sqrt{-\Delta}} \pj f  \right\Vert_{L^q_{x,t}(\R^2\times [1,2])}\lesssim 2^{\frac{3}{2}\left( \frac{1}{p}-\frac{1}{q}\right)  j+ \epsilon j}\Vert f\Vert_{L^q} 
 \Ee
\end{thm}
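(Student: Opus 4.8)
The plan is to obtain \eqref{local-smoothing} over the whole range \eqref{pq} by interpolating just two estimates. In the $(1/p,1/q)$-plane the constraints in \eqref{pq} cut out exactly the closed triangle $\mathbf T'$ with vertices $(1/4,1/4)$, $(1/2,1/2)$ and $(1,0)$, and the exponent $(1/p,1/q)\mapsto \tfrac32(1/p-1/q)$ is affine there, vanishing on the diagonal edge joining $(1/4,1/4)$ to $(1/2,1/2)$ and equal to $\tfrac32$ at $(1,0)$. Since every point of $\mathbf T'$ lies on a line segment joining $(1,0)$ to some point $(1/p_0,1/p_0)$ with $2\le p_0\le 4$, and since $f\mapsto e^{it\sqrt{-\Delta}}\mathcal P_j f$ is linear, it suffices by the Riesz--Thorin theorem to prove (i) $\|e^{it\sqrt{-\Delta}}\mathcal P_j f\|_{L^\infty(\R^2\times[1,2])}\lesssim 2^{3j/2}\|f\|_{L^1}$, and (ii) $\|e^{it\sqrt{-\Delta}}\mathcal P_j f\|_{L^{p_0}(\R^2\times[1,2])}\lesssim_\epsilon 2^{\epsilon j}\|f\|_{L^{p_0}}$ for $2\le p_0\le 4$: writing $1/p=(1-\theta)/p_0+\theta$ and $1/q=(1-\theta)/p_0$ one has $1/p-1/q=\theta$, and interpolating (ii) at $p_0$ with (i) gives operator norm $\lesssim 2^{((1-\theta)\epsilon+\frac32\theta)j}\le 2^{(\frac32(1/p-1/q)+\epsilon)j}$, which is the asserted bound after relabeling $\epsilon$.

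For (i), the expansion \eqref{asym} shows that $e^{it\sqrt{-\Delta}}\mathcal P_j$ is convolution in $x$ with a kernel $K_j(t,\cdot)$; since $\widehat{d\sigma}$ has size $|\xi|^{-1/2}$ on the relevant annulus and the phase is linear in the radial variable, a routine stationary phase computation in the single angular variable gives $\|K_j(t,\cdot)\|_{L^\infty_x}\lesssim 2^{3j/2}$ uniformly for $t\in[1,2]$, and (i) follows from Young's inequality. For (ii), the case $p_0=2$ is immediate from Plancherel, with no loss at all, while the case $p_0=4$ is the frequency-localized form of the sharp $L^4$ local smoothing estimate for the wave equation in $\R^2$; this may be taken from the local smoothing theorem of Guth--Wang--Zhang, or, as noted in the introduction, derived by an interpolation-and-scaling argument from the trilinear restriction estimate for the cone together with sharp local smoothing at a large exponent (cf.\ \cite{LV}). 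Interpolating the $p_0=2$ and $p_0=4$ bounds yields (ii) for all $2\le p_0\le 4$, and together with (i) this completes the proof.

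The one genuinely substantial input is the $L^4$ estimate entering (ii): it is equivalent to the $L^4$ local smoothing estimate for the wave equation in $\R^2$, and it is the sole point at which deep harmonic analysis — the Guth--Wang--Zhang square function estimate, or trilinear cone restriction combined with large-$p$ local smoothing — is required. Everything else is elementary bookkeeping with stationary phase, Plancherel, and Riesz--Thorin interpolation; in particular the geometry of the triangle $\mathbf T'$ and the affineness of the exponent are what make the reduction to these two endpoints possible.
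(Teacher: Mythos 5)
Your proposal is correct and takes essentially the same route as the paper, which also proves \eqref{local-smoothing} by interpolating the three vertex estimates at $(1/p,1/q)=(1/2,1/2)$ (Plancherel), $(1,0)$ (stationary phase on the wave kernel), and $(1/4,1/4)$ (Guth--Wang--Zhang); you merely spell out the two-step Riesz--Thorin reduction and the triangle geometry in more detail. The only minor slip is attributing the $L^1\to L^\infty$ kernel bound to \eqref{asym} --- that expansion concerns $\widehat{d\sigma}$, whereas the bound $\|\mathcal F^{-1}[e^{it|\cdot|}\phi_j(|\cdot|)]\|_{L^\infty}\lesssim 2^{3j/2}$ follows directly from stationary phase applied to the wave kernel itself, as in \cite{L}.
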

This follows by interpolating   the  estimates \eqref{local-smoothing} with $(p,q)=(2,2)$,  $(1,\infty)$, and  $(4,4)$. 
The estimate \eqref{local-smoothing} with $(p,q)=(2,2)$ is a straightforward consequence of Plancherel's theorem and 
\eqref{local-smoothing} with $(p,q)=(1,\infty)$ can be shown by the stationary phase  method (for example,  see \cite{L}). The 
case $(p,q)=(4,4)$   is due to Guth, Wang, and Zhang \cite{GWZ}. 
 
 From Theorem \ref{local-pq} we  can deduce the following estimate via simple rescaling argument.

\newcommand{\wv}{ e^{it\sqrt{-\Delta}} } 
\newcommand{\wvt}{ e^{i\tau\sqrt{-\Delta}} } 
\begin{lem}\label{scaled local smoothing}
Let $j\ge -\ell$. 
Suppose \eqref{pq} holds.  Then, for $\epsilon>0$ we have
\[ \left\Vert e^{it\sqrt{-\Delta}} \pj f  \right\Vert_{L^q_{x,t}(\R^2\times [2^\ell,2^{\ell+1}])}\lesssim 2^{\frac{3}{2}\left( \frac{1}{p}-\frac{1}{q} \right)(\ell+j)+\left(\frac{3}{q}-\frac{2}{p}\right)\ell+ \epsilon (\ell+j)}\Vert f\Vert_{L^p} .\]
\end{lem}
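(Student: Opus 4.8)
The plan is to deduce Lemma \ref{scaled local smoothing} from Theorem \ref{local-pq} by a parabolic rescaling that converts the time interval $[2^\ell, 2^{\ell+1}]$ into $[1,2]$. First I would introduce the rescaled function $f^\ell(x) := f(2^\ell x)$, so that $\widehat{f^\ell}(\xi) = 2^{-2\ell}\widehat f(2^{-\ell}\xi)$. A change of variables $x = 2^\ell x'$, $t = 2^\ell t'$ in the definition of $e^{it\sqrt{-\Delta}}$ shows that $e^{it\sqrt{-\Delta}} h(x) = (e^{it'\sqrt{-\Delta}} h^\ell)(x')$ after the obvious substitution in the Fourier integral, since the phase $x\cdot\xi + t|\xi|$ is invariant under the simultaneous scaling $x\to 2^\ell x$, $t\to 2^\ell t$, $\xi \to 2^{-\ell}\xi$. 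The frequency localization transforms accordingly: the projection $\mathcal P_j$ acting on $f$ corresponds, after rescaling, to the projection $\mathcal P_{j+\ell}$ acting on $f^\ell$ (the dyadic annulus $|\xi|\sim 2^j$ becomes $|\xi|\sim 2^{j+\ell}$). Note that $j \ge -\ell$ guarantees $j+\ell\ge 0$, which is exactly the hypothesis needed to apply Theorem \ref{local-pq}.

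Next I would track the $L^q_{x,t}$ and $L^p_x$ norms through the change of variables. On the left side, $\|e^{it\sqrt{-\Delta}}\mathcal P_j f\|_{L^q_{x,t}(\R^2\times[2^\ell,2^{\ell+1}])}$ picks up a Jacobian factor $2^{3\ell/q}$ (two spatial variables and one time variable, each contributing $2^{\ell/q}$) and equals $2^{3\ell/q}\|e^{it'\sqrt{-\Delta}}\mathcal P_{j+\ell} f^\ell\|_{L^q_{x',t'}(\R^2\times[1,2])}$. Applying Theorem \ref{local-pq} with exponent $j+\ell$ bounds this by $2^{3\ell/q} \, 2^{\frac32(\frac1p-\frac1q)(j+\ell)+\epsilon(j+\ell)}\|f^\ell\|_{L^p}$. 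Finally $\|f^\ell\|_{L^p} = 2^{-2\ell/p}\|f\|_{L^p}$. Collecting the powers of $2^\ell$ outside the $(j+\ell)$-term: $\frac{3\ell}{q} - \frac{2\ell}{p} = (\frac3q - \frac2p)\ell$, which is precisely the exponent claimed in the lemma. Thus the estimate follows directly.

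This argument is essentially bookkeeping once the scaling symmetry of the wave propagator is set up correctly, so I do not anticipate a genuine obstacle; the one point requiring mild care is making sure the frequency projection rescales to $\mathcal P_{j+\ell}$ rather than something else and that the resulting index is nonnegative, but the hypothesis $j\ge-\ell$ is stated precisely for this reason. One should also remark that Theorem \ref{local-pq} is stated with $\|f\|_{L^q}$ on the right; I would either read that as a typo for $\|f\|_{L^p}$ (which is what the interpolation of the $(2,2)$, $(1,\infty)$, $(4,4)$ endpoints actually produces) or, more carefully, note that on frequency-localized functions the relevant Bernstein-type relation between $L^p$ and $L^q$ norms only changes constants by powers of $2^j$ that are harmless after the rescaling — but the cleanest route is simply to use the $L^p\to L^q$ form of the local smoothing inequality, which is the content being invoked.
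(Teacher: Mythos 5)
Your proposal is correct and coincides with the paper's proof: both rescale $(x,t)\to 2^\ell(x,t)$, observe that $\mathcal P_j$ acting on $f$ becomes $\mathcal P_{j+\ell}$ acting on $f(2^\ell\cdot)$, collect the Jacobian factor $2^{3\ell/q}$ and the $L^p$-norm factor $2^{-2\ell/p}$, and apply Theorem \ref{local-pq} at frequency $j+\ell\ge 0$. Your observation that $\Vert f\Vert_{L^q}$ on the right-hand side of \eqref{local-smoothing} is a misprint for $\Vert f\Vert_{L^p}$ is also correct, as is confirmed by the endpoints used in the interpolation and by the way the estimate is invoked here.
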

\begin{proof}
Changing variables $(x,t)\to 2^\ell(x,t)$,   we see 
\begin{align*}
 \left\Vert   e^{it\sqrt{-\Delta}} \pj f  \right \Vert_{L^q_{x,t}(\R^2\times [2^\ell,2^{\ell+1}])} = 2^{\frac{3\ell}{q}}\left\Vert   e^{it\sqrt{-\Delta}}  \mathcal P_{\ell+j} f(2^\ell\cdot) \right\Vert_{L^q_{x,t}(\R^2\times [1,2])}.
\end{align*}
Thus, using \eqref{local-smoothing} we have 
\[ \left\Vert   e^{it\sqrt{-\Delta}} \pj f  \right \Vert_{L^q_{x,t}(\R^2\times [2^\ell,2^{\ell+1}])} \lesssim  2^{\frac{3\ell}{q}+\frac{3}{2}\left( \frac{1}{p}-\frac{1}{q} \right)(\ell+j)+ \epsilon (\ell+j)}\Vert f(2^\ell\cdot)\Vert_{L^p}. \] 
So, rescaling gives the desired inequality. 
\end{proof}

\subsection{Proof of Proposition \ref{goal2}}
   We now recall \eqref{def-A} and \eqref{asym}.  To show Proposition \ref{goal2} 
we first deal with the contribution from the error part $E_N$.  Let us set 
 \[   \mathcal E_t  g  (r,x_3) =   \int e^{i(\frac{r^2+t^2}{2}\xi_1+x_3\xi_2)} E_N(tr|\xi|)\,   \widehat{g}(\xi) d\xi.  \]

\begin{lem}
\label{lem:err}
Let  $j\geq -k$. Suppose  \eqref{pq} holds. Then, we have
\begin{equation}\label{eq:err}
\Big\Vert \sup_{1<t<2}  | \phi_k(r)\mathcal E_t \mathcal{P}_j g |  \Big \Vert_{L^q_{r,x_3}}
\lesssim 
\begin{cases}
2^{-(N-3)(j+k)}2^{k(\frac{1}{q}-\frac{2}{p})}\Vert g\Vert_{L^p},  & \   k\ge -2,
\\[3pt]
2^{-(N-3)(j+k)}2^{k(\frac{3}{q}-\frac{2}{p})} \Vert g\Vert_{L^p}, & \  k< -2.
\end{cases}
\end{equation}
\end{lem}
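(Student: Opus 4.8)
The plan is to exploit the rapid decay of $E_N$ directly, treating $\mathcal E_t\mathcal P_j$ essentially as a nice convolution operator with a Schwartz-type kernel whose size is controlled by negative powers of $2^{j+k}$, and then to extract the supremum over $t$ via Lemma \ref{sobo}.

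\begin{proof}
We recall \eqref{def-A} and \eqref{asym}--\eqref{asym2}. Since $\mathcal P_j g$ has Fourier support in $\{|\xi|\sim 2^j\}$ and $r\in\supp\phi_k$, on the relevant frequency region we have $tr|\xi|\sim 2^{j+k}$, which is $\gtrsim 1$ because $j\ge -k$; thus the bound \eqref{asym2} applies. We first reduce to an $L^q_{r,x_3,t}$ estimate. Note $\partial_t\big(E_N(tr|\xi|)\big)= r|\xi|\,E_N'(tr|\xi|)$ and $\partial_t e^{\frac{r^2+t^2}{2}\xi_1}=t\xi_1 e^{\frac{r^2+t^2}{2}\xi_1}$, so on the support of $\phi_k(r)\widehat{\mathcal P_j g}$ we have $\|\phi_k(r)\partial_t\mathcal E_t\mathcal P_j g\|_q\lesssim 2^{\max(j,-k)}\|\phi_k(r)\widetilde{\mathcal E}_t\mathcal P_j g\|_q$ for a multiplier $\widetilde E_N$ of the same type (after absorbing the extra factor $r|\xi|$ or $t\xi_1$, both of size $\lesssim 2^{\max(j,-k)}$ on the support, into the Mikhlin class). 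Hence by Lemma \ref{sobo} it suffices to prove
\[
\big\Vert \phi_k(r)\mathcal E_t\mathcal P_j g\big\Vert_{L^q_{r,x_3,t}(\R^2\times[1,2])}\lesssim 2^{-(N-3)(j+k)}\,2^{k(\frac1q-\frac2p)}\Vert g\Vert_{L^p}
\]
when $k\ge-2$, with the exponent $2^{k(\frac3q-\frac2p)}$ replacing $2^{k(\frac1q-\frac2p)}$ when $k<-2$; the loss of a power $2^{\max(j,-k)}$ from differentiation and the extra $2^{j}$-type factors that appear are harmless against $2^{-N(j+k)}$ after adjusting $N$ to $N-3$.

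Next I perform the change of variables \eqref{chg}, $(r,x_3,t)\mapsto(y_1,y_2,\tau)=(\tfrac{r^2+t^2}{2},x_3,rt)$, whose Jacobian is $r^2-t^2$ by \eqref{jacobian}. For $k\ge 2$ (resp.\ $k<-2$) this has size $\sim\max(2^{2k},1)$ and is bounded below, while for $|k|\le 1$ one argues exactly as in the body of the paper (further decomposing away from $\{r=t\}$, which loses only logarithmically and is absorbed). After this substitution the left-hand side becomes
\[
\Big\Vert \phi_k\big(r(y,\tau)\big)\int_{\R^2} e^{iy\cdot\xi}\,E_N(\tau|\xi|)\,\phi_j(|\xi|)\,\widehat g(\xi)\,d\xi\Big\Vert_{L^q_{y,\tau}}
\]
with $\tau\sim\max(2^k,1)$ and the region in $(y,\tau)$ of measure $\sim\max(2^{2k},1)$. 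Rescaling $\xi\to 2^{-j}\xi$ and $(y,\tau)\to 2^{-j}(y,\tau)$ turns $E_N(\tau|\xi|)\phi_j(|\xi|)$ into $E_N(2^{-j}\tau|\xi|)\phi(|\xi|)$ with $2^{-j}\tau\sim 2^{-(j+k)}$ (for $k\ge 0$), so by \eqref{asym2} this multiplier and all its derivatives of order $\le 2$ are $O(2^{-(N-2)(j+k)})$; hence it defines an operator bounded from $L^p(\R^2)$ to $L^q(\R^2)$ with norm $\lesssim 2^{-(N-2)(j+k)}$ uniformly in the rescaled $\tau$. Undoing the rescalings and integrating the $\tau$-variable over an interval of length $\sim\max(2^k,1)/2^{j}\cdot 2^{j}=\max(2^k,1)$ produces the stated powers of $2^k$: a factor $2^{3k/q}$ from the measure of the $(y,\tau)$-region and $2^{-2k/p}$ from the $L^p$ rescaling of $g$ in the case $k<-2$, and the analogous $2^{k/q}$, $2^{-2k/p}$ in the case $k\ge -2$ (where only the $y$-region of measure $\sim1$ and a $\tau$-interval of length $\sim 2^k$ contribute, as in the treatment of $\mathcal A_t g_k$ for $k\le-2$). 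The case $k<-2$ with $j$ large ($2^{-j}\tau$ possibly small) only improves the decay, since $E_N$ is smooth near $0$ and bounded.

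The main obstacle is bookkeeping the scaling exponents uniformly across the three regimes $k\ge 2$, $|k|\le 1$, and $k<-2$, and checking that in the degenerate range $|k|\le 1$ the singularity of the map \eqref{chg} on $\{r=t\}$ costs nothing essential — but this is precisely the decomposition already carried out for $\mathcal A_t$ in Section \ref{singular section}, and the extra factor $2^{-N(j+k)}$ from $E_N$ swamps any polynomial loss there. Everything else is Mikhlin multiplier theory, Young's inequality (for the $L^p\to L^q$ bound of the rescaled smooth multiplier), and a single application of Lemma \ref{sobo}.
\end{proof}
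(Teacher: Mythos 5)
Your proposal is sound in spirit for $|k|\ge 2$: there the change of variables \eqref{chg} is nondegenerate, and treating the rescaled $E_N$ as a smooth multiplier with uniform derivative bounds $O(2^{-N(j+k)})$, applying Young/Mikhlin, and then pulling out the supremum via Lemma \ref{sobo} does reproduce the stated exponents (for $k<-2$ this is essentially what the paper does). The problem is the regime $|k|\le 1$, which is part of the case $k\ge -2$ and which you dispatch with ``one argues exactly as in the body of the paper (further decomposing away from $\{r=t\}$, which loses only logarithmically and is absorbed).'' This is incorrect on two counts. First, the dyadic decomposition in $|r-t|\sim 2^{-l}$ contributes a factor $2^{l/q}$ to the $L^q$ norm (from the Jacobian $|r^2-t^2|\sim 2^{-l}$ under \eqref{chg}), and the sum $\sum_{l\ge 0} 2^{l/q}$ is geometric, not logarithmic; it diverges. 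Second, the rapid decay $2^{-N(j+k)}$ of $E_N$ is a fixed factor independent of $l$, so it cannot furnish convergence in $l$. In the paper's Section \ref{singular section} the $l$-sum for $\mathcal U_t$ converges only because of the additional angular decomposition $\psi_m$ and the gain in $m$ it produces; that mechanism is absent in your argument, and it is also not what the paper does for the error term $\mathcal E_t$.

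The paper's proof for $k\ge -2$ deliberately avoids \eqref{chg} altogether: it substitutes only $r^2/2\mapsto s$ (a nondegenerate change in $r$ alone, Jacobian $\sim r\sim 2^k$), writes $\phi_k\mathcal E_t\mathcal P_j g$ as a convolution $\mathcal K(\cdot+t^2/2,\cdot)\ast g$ in $(s,x_3)$ with $t$ merely a parameter, bounds the kernel by a Schwartz tail of size $2^{-N(j+k)}$ uniformly in $1<t<2$ via \eqref{asym2}, and applies Young's inequality, with the $t$-derivative handled the same way before Lemma \ref{sobo}. Since this never mixes $r$ and $t$, no singularity near $\{r=t\}$ arises. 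To repair your proof you should either follow that route for $k\ge -2$ (which includes $|k|\le 1$), or, if you insist on \eqref{chg}, supply a genuine convergence mechanism for the $l$-sum (which for the error term you do not have). Apart from this, the bookkeeping of exponents in your sketch (absorbing the $\partial_t$ loss and the $2^{j(1/p-1/q)}$ factors against $2^{-N(j+k)}$, yielding $N-3$) is consistent with the paper, and the treatment of $k\ge 2$ via \eqref{chg} plus the localization is a legitimate alternative to the paper's $r^2/2\mapsto s$ substitution.
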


\begin{proof}
We first consider the case $k\ge -2$. 
Using  Lemma \ref{sobo},  we  need to estimate $ \phi_k(r)\mathcal E_t \mathcal{P}_j g$ and $ \phi_k(r)\partial_t \mathcal E_t \mathcal{P}_j g$ in   $L^q_{r,x_3,t}(\mathbb R^2\times [1,2])$. 
For simplicity  we denote $L^q_{r,x_3,t}=L^q_{r,x_3,t}(\mathbb R^2\times [1,2])$. We first consider $ \phi_k(r)\mathcal E_t \mathcal{P}_j g$. 
Changing variables $\frac{r^2}{2}\mapsto s$, we note that
\[ \phi_k(\sqrt{2s})\mathcal E_t \mathcal{P}_j g(\sqrt{2s},x_3)=\phi_k(\sqrt{2s})\int \mathcal{K}\big( s-y_1+ 2^{-1}{t^2},x_3-y_2 \big)g(y_1,y_2)dy, \]
where
\begin{align*}
\mathcal{K}(s,u) = 2^{2j}  
\int e^{i2^j(s\xi_1+u\xi_2)}\phi_0(\xi)E_N(2^jt\sqrt{2s}|\xi|)d\xi.
\end{align*}
Since $s\sim 2^{2k}$, using \eqref{asym2}, we have
$|\mathcal{K}(s,u)|\lesssim 2^{2j}(1+2^j|(s,u)|)^{-M}2^{-N(j+k)} $
for  $1\le M\le 4$  via integration by parts. Thus,  we have 
$ \Vert \phi_k(\sqrt{2s})\mathcal{K}(s+\frac{t^2}{2}, u) \Vert_{L^r_{s,u}}\le C 2^{-N(j+k)}2^{2j(1-\frac{1}{r})} $
for  $1<t<2$ with a positive constant $C$.  Young's convolution inequality gives 
$ \| \phi_k(\sqrt{2s})\mathcal E_t \mathcal{P}_j g(\sqrt{2s},x_3)\|_{L^q_{s,x_3,t}}\lesssim  2^{-N(j+k)}2^{2j(\frac{1}{p}-\frac{1}{q})}\Vert g\Vert_{L^p} $. 
Thus, reversing $s\to r^2/2$,  after a simple manipulation we get 
\Be \label{k-large} \Big\Vert \phi_k(r)\mathcal E_t\mathcal{P}_j g  \Big\Vert_{L^q_{r,x_3,t}}\lesssim 2^{-(N-2)(j+k)} 2^{k(\frac{1}{q}-\frac{2}{p})}\Vert g\Vert_{L^p}\Ee
for $1\le p\le q\le \infty.$ 
We now consider $ \phi_k(r)\partial \mathcal E_t \mathcal{P}_j g$. Note that 
\Be  \label{err-d}
 \partial_t \mathcal E_t  g  (r,x_3) =   \int e^{i(\frac{r^2+t^2}{2}\xi_1+x_3\xi_2)}\big (t\xi_1 E_N(tr|\xi|)+     r|\xi|E_N'(tr|\xi|)\big)   \widehat{g}(\xi) d\xi. \Ee
Using \eqref{asym2}, we can handle $ \phi_k(r)\partial \mathcal E_t \mathcal{P}_j g$ similarly as before. In fact, since $|t\xi_1|\lesssim 2^j$ and $ r|\xi|\sim 2^{k+j}$, 
we see 
\[
\Big\Vert \phi_k(r) \partial_t \mathcal E_t \mathcal{P}_j g  \Big \Vert_{L^q_{r,x_3}}
\lesssim 2^{-(N-2)(j+k)}2^{k(\frac{1}{q}-\frac{2}{p})}(2^{j+k}+2^j)\Vert g\Vert_{L^p}. \] 
Hence, combining this and \eqref{k-large} with Lemma \ref{sobo}, we get \eqref{eq:err} for $k\ge -2$.

We now consider the case $k< -2$. We first claim that 
\Be 
\label{k-small}
 \Vert \phi_k(r)\mathcal E_t\mathcal{P}_j g\Vert_{L^q_{r,x_3,t}}\lesssim 2^{-(N-2)(j+k)}2^{k(\frac{2}{q}-\frac{2}{p})}\Vert g\Vert_{L^p}.
 \Ee
We use the transformation \eqref{chg}. By \eqref{jacobian} we have  $|\frac{\partial(y_1,y_2,\tau)}{\partial(r,x_3,t)}|\sim 1$. Therefore, 
\[ \Vert \phi_k(r)\mathcal E_t\mathcal{P}_j g\Vert_{L^q_{r,x_3,t}}\lesssim \Big( \int \Big| \phi_k(r(y,\tau)) \widetilde{K}(\cdot,\tau)\ast g(y) \Big|^qdy d\tau \Big)^{\frac{1}{q}},\]
where
\[ \widetilde{K}(y,\tau)=\int e^{iy\cdot\xi}\phi_j(\xi)E_N(\tau|\xi|)d\xi. \]
Note that $\tau\sim 2^k$. Changing $\tau\mapsto 2^k\tau$ and $\xi\mapsto 2^j\xi$, using \eqref{asym2} and integration by parts,  we have
$ |\widetilde{K}(y,2^k\tau)|\le C 2^{2j}(1+2^j|y|)^{-M}2^{-N(j+k)} $
for  $1\le M\le 4$ and $1<\tau<2$.  Young's convolution inequality gives
\[ \Vert \phi_k(r)\mathcal E_t\mathcal{P}_j g \Vert_{L^q_{r,x_3,t}}\lesssim 2^{-N(j+k)}2^{2j(\frac{1}{p}-\frac{1}{q})}\Vert g\Vert_{L^p}. \]
 Thus, we get \eqref{k-small}.
As for $ \phi_k(r)\partial \mathcal E_t \mathcal{P}_j g$, we use  \eqref{err-d} and repeat the same argument to see  $\Vert \phi_k(r)\partial_t \mathcal E_t\mathcal{P}_j g \Vert_{L^q_{r,x_3,t}}\lesssim 2^{-N(j+k)} 2^j 2^{2j(\frac{1}{p}-\frac{1}{q})}\Vert g\Vert_{L^p} $ since $|t\xi_1|\lesssim 2^j$, $ r|\xi|\sim 2^{k+j}$, and $k<-2$. Thus, we get 
\[ \Vert \phi_k(r)\partial_t \mathcal E_t\mathcal{P}_j g \Vert_{L^q_{r,x_3,t}} \lesssim 2^{-(N-2)(j+k)} 2^k 2^{k(\frac{2}{q}-\frac{2}{p})}\Vert g\Vert_{L^p}.\]
 Putting \eqref{k-small} and this together, by Lemma \ref{sobo}  we obtain  \eqref{eq:err} for $k<-2$. 
\end{proof}

By \eqref{asym} and Lemma \ref{lem:err}, 
to prove Proposition  \ref{goal2}  and  \ref{goal4} 
we only have to consider  contributions from the  remaining $ C^\pm_j   |tr\xi|^{-\frac12-j}     e^{ \pm i|tr\xi|},$  $j=0,\dots, N$. 
To this end, it is sufficient to consider
the major term $ C^\pm_0  |tr\xi|^{-\frac12}     e^{ \pm i|tr\xi|}$ since the other terms can be handled similarly.  
Furthermore,  by reflection $t\to -t$  it is enough to deal with  $|tr\xi|^{-\frac12}     e^{i|tr\xi|}$ since the estimate 
\eqref{local-smoothing}  clearly holds with the interval $[1,2]$ replaced by $[-2,-1]$.

 Let us set
\Be
\label{def-uk}
  \mathcal U_t g(r,x_3) =  \int e^{i(\frac{r^2+t^2}{2}\xi_1+x_3\xi_2+ tr|\xi|)}  |r\xi|^{-\frac12}    \widehat{g}(\xi) d\xi.  
  \Ee
To complete the proof of Proposition  \ref{goal2},  
we need to show 
\begin{equation}\label{k2}
\Big\Vert \sup_{1<t<2}  | \phi_k(r)  \mathcal U_t \pj g |  \Big \Vert_{L^q_{r,x_3}}
\lesssim 
\begin{cases}
2^{(j+k)(\frac{3}{2p}-\frac{1}{2q}-\frac{1}{2}+\epsilon)+\frac{k}{q}-\frac{2k}{p}}\Vert g\Vert_{L^p},  & \  \  \ k\ge 2,
\\[3pt]
2^{(j+k)(\frac{3}{2p}-\frac{1}{2q}-\frac{1}{2}+\epsilon)+\frac{2k}{q}-\frac{2k}{p}}\Vert g\Vert_{L^p}, & \  k\le -2.
\end{cases}
\end{equation}
Using  Lemma \ref{sobo},   the matter is reduced to obtaining estimates for $\phi_k(r)\mathcal U_t \pj g$ and  $\phi_k(r) \partial_t \mathcal U_t \pj g$ in  $L^q_{r,x_3,t}$.
Note that
\Be
\label{partial-uk}
 \partial_t\mathcal U_t \pj g (r,x_3,t)=\int e^{i(\frac{r^2+t^2}{2}\xi_1+x_3\xi_2+ tr|\xi|)}\widehat{\pj g}(\xi)\frac{t\xi_1+ r|\xi|}{|r\xi|^{1/2}} d\xi. 
 \Ee
By the Mikhlin  multiplier theorem one can easily see  
\[  \|  \phi_k(r) \partial_t\mathcal U_t \pj g\|_{L^q_{r,x_3,t}} \lesssim 
\begin{cases}
2^{j+k}\Vert   \phi_k(r) \mathcal U_t \pj  g \Vert_{L^q_{r,x_3,t}}, \quad &  k\ge 0,
\\
2^{j}\Vert   \phi_k(r) \mathcal U_t \pj g \Vert_{L^q_{r,x_3,t}}, \quad & k<  0,
\end{cases} 
\]
where $L^q_{r,x_3,t}$ denotes $L^q_{r,x_3,t}(\mathbb R^2\times [1,2])$. 
Therefore,  by  Lemma \ref{sobo} it is sufficient  for \eqref{k2} to prove that
\[ \Vert   \phi_k(r) \mathcal U_t \pj g \Vert_{L^q_{r,x_3,t}}\lesssim  \
\begin{cases}
 2^{(j+k)(\frac{3}{2p}-\frac{3}{2q}-\frac{1}{2}+\epsilon)+\frac{k}{q}-\frac{2k}{p}}\Vert g\Vert_{L^p},  \quad  & \  \  k\ge 2,
 \\
 2^{(j+k)(\frac{3}{2p}-\frac{3}{2q}-\frac{1}{2}+\epsilon)+\frac{3k}{q}-\frac{2k}{p}}\Vert g\Vert_{L^p} , \quad & k\le - 2.
 \end{cases}
 \]
We first consider the case $k\ge 2$. 
As before, we use the change of variables \eqref{chg}. Since $|\!\det\frac{\partial (y_1, y_2, \tau)}{\partial  (r,x_3,t)}|\sim 2^{2k}$ from \eqref{jacobian} and since $\tau=rt$ and $1<t<2$, we have
\[ \big\Vert    \phi_k(r)  \mathcal U_t \pj g\big\Vert_{L^q_{r,x_3,t}}\lesssim 2^{-\frac{2k}{q}-\frac{j+k}{2}}\big\Vert
e^{i\tau \sqrt{-\Delta}} \pj f  
\big\Vert_{{L^q_{y,\tau}(\R^2\times [2^{k-1},2^{k+2}])}} \]
since $|r\xi|\sim 2^{j+k}$.
Thus, Lemma \ref{scaled local smoothing} gives the desired estimate \eqref{k2} for $k\ge 2$. 
The case $k\le -2$ can be handled in the  exactly same manner. The only difference is that $|\!\det\frac{\partial (y_1, y_2, \tau)}{\partial  (r,x_3,t)}| \sim 1$. Thus, the desired estimate \eqref{k2}  immediately
follows from Lemma \ref{scaled local smoothing}.

\subsection{Proof of Proposition \ref{goal4}}\label{singular section}
As mentioned already, the determinant of Jacobian ${\partial (y_1, y_2, \tau)}/{\partial  (r,x_3,t)}$ may vanish when $|k|\le 1$. So, 
we need additional decomposition depending on   $|r-t|$. We also make decomposition in $\xi$ depending on $|\xi|^{-1}{\xi_1}+1$ to control the size of the multiplier $\left| t\xi_1+r|\xi| \right|$ more accurately  (for example,  
see \eqref{compare2}).

{For $m\ge 0$ let us set}
\begin{align*}  
\psi_m(\xi)&=\phi\big(2^m\big| |\xi|^{-1} \xi_1 +1\big |\big),
\\ 
\psi^m(\xi)&=1-\sum_{0\le j <m}\psi_j(\xi).
\end{align*}
so that  $\sum_{0\le k <m}\psi_k+ \psi^m=1$.  We additionally define 
\newcommand{\pjm}{\mathcal P_{\!j,m}}
\[  \mathcal P_{\!j,m} g=  ( \phi_j  \psi_m \widehat g)^\vee, \quad   \mathcal P_{\!j}^{m} g=  ( \phi_j  \psi^m\, \widehat g)^\vee. \] 
So it follows that 
\Be\label{proj-m} \pj  =\sum_{0\le k <m} \mathcal P_{\!j,k}+  \mathcal P_{\!j}^{m}. \Ee

\begin{prop}\label{singular local smoothing}  Let us set 
$\phi_{k,l}(r,t)= \phi_k(r)\phi(2^l|r-t|). $
Let  $j\geq -1$ and $k=-1,0,1$. Suppose \eqref{pq} holds. Then, 
if  $0\leq l\leq m/{2}$, for  $\epsilon>0$ we have
\Be
\label{l-small}
\Vert  \phi_{k,l} \mathcal U_t  \pjm g\Vert_{L^q_{r,x_3,t}}\lesssim  2^{-\frac 12 j} 2^{\frac{l}{q}} 2^{(\frac{m}{2}-l)(\frac{1}{p}+\frac{3}{q}-1)+\frac{3j}{2}(\frac{1}{p}-\frac{1}{q})+\epsilon j}\Vert g\Vert_{L^p}. 
\Ee
\end{prop}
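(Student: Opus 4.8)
The plan is to prove \eqref{l-small} by reducing it, via the change of variables \eqref{chg} already used in the other cases, to the $L^p$--$L^q$ local smoothing estimate for the two–dimensional half–wave propagator (Lemma \ref{scaled local smoothing}), while keeping track of the two extra scales $2^{-m/2}$ (the angular aperture built into $\psi_m$) and $2^{-l}$ (the distance from the fold $\{r=t\}$).

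\emph{Step 1: passage to the wave propagator.} As in Section \ref{pf-pros}, by \eqref{asym}--\eqref{asym2} together with a Lemma \ref{lem:err}--type bound for the remainder, it suffices to treat the principal term, i.e.\ the operator $\mathcal U_t$ of \eqref{def-uk}. I would then apply \eqref{chg}, $(r,x_3,t)\mapsto (y_1,y_2,\tau)=\big(\tfrac{r^2+t^2}{2},x_3,rt\big)$. On $\supp\phi_{k,l}$ we have $|k|\le 1$ and $|r-t|\sim 2^{-l}$, so by \eqref{jacobian} $\big|\det\tfrac{\partial(y_1,y_2,\tau)}{\partial(r,x_3,t)}\big|=|r-t|\,(r+t)\sim 2^{-l}$; splitting into the two branches $r>t$, $r<t$ of the inverse map (smooth away from $\{r=t\}$) and using
\[\tfrac{r^2+t^2}{2}\xi_1+x_3\xi_2+tr|\xi|=y\cdot\xi+\tau|\xi|,\qquad y_1-\tau=\tfrac12(r-t)^2,\]
one obtains
\[\big\Vert \phi_{k,l}\,\mathcal U_t\,\pjm g\big\Vert_{L^q_{r,x_3,t}}\ \lesssim\ 2^{l/q}\,2^{-j/2}\,\big\Vert \chi(y,\tau)\, e^{i\tau\sqrt{-\Delta}}\,\widetilde{\mathcal P}_{j,m} g\big\Vert_{L^q_{y,\tau}},\]
where $2^{-j/2}$ is the size of $|r\xi|^{-1/2}$ (a symbol of order $0$ on the support, absorbed into the rest), $\chi$ is a smooth cutoff to the image region $\{\tau\sim 1,\ |y_1-\tau|\sim 2^{-2l}\}$ --- a slab of width $2^{-2l}$ around the sheet $\{y_1=\tau\}$ of the light cone --- and $\widetilde{\mathcal P}_{j,m}$ is the Fourier projection onto the plate $\{|\xi|\sim 2^{j},\ |\xi_2|\sim 2^{-m/2}|\xi|,\ \xi_1<0\}$ cut out by $\phi_j\psi_m$.

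\emph{Step 2: rescaling and local smoothing.} Next I would exploit the plate $\widetilde{\mathcal P}_{j,m}$ by the anisotropic (parabolic) dilation $\xi=(\eta_1,2^{-m/2}\eta_2)$, which turns it into a fixed–aperture sector $\{|\eta|\sim 2^{j}\}$ and turns $|\xi|=\sqrt{\eta_1^2+2^{-m}\eta_2^2}$ into $|\eta_1|+\tfrac{2^{-m}}{2|\eta_1|}\eta_2^2+O(2^{\,j-2m})$. After a measure–preserving shear in $(y_1,\tau)$ (which removes the transport term $\eta_1(y_1-\tau)$) and the dilation normalizing the coefficient of the cone term $\eta_2^2/|\eta_1|$, and using the self–similarity of the wave equation, the quantity $\big\Vert \chi\, e^{i\tau\sqrt{-\Delta}}\widetilde{\mathcal P}_{j,m}g\big\Vert_{L^q_{y,\tau}}$ is recast as a rescaled half–wave estimate to which Lemma \ref{scaled local smoothing} applies, with time parameter $\ell$ of size $\sim\tfrac m2-l\ge 0$ and total frequency order $j$ (equivalently Theorem \ref{local-pq} after a further dyadic splitting in time). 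The hypothesis $0\le l\le\tfrac m2$ enters precisely here: it guarantees $\tfrac m2-l\ge 0$, so the rescaled data meet the constraint $j\ge-\ell$ of Lemma \ref{scaled local smoothing}, and it forces the slab width $2^{-2l}$ to be no smaller than the plate thickness, so the cutoff $\chi$ may be discarded. Collecting the three Jacobians --- from \eqref{chg}, from $\xi=(\eta_1,2^{-m/2}\eta_2)$, and from the time–normalizing dilation --- together with the induced dilation of $\Vert g\Vert_{L^p}$ and the exponents supplied by Lemma \ref{scaled local smoothing}, one arrives at \eqref{l-small}: the $2^{l/q}$ is the Jacobian of \eqref{chg}, $2^{-j/2}$ is $|r\xi|^{-1/2}$, the mixed factor $2^{(\frac m2-l)(\frac1p+\frac3q-1)}$ records the remaining rescaling factors, and $2^{\frac{3j}2(\frac1p-\frac1q)+\epsilon j}$ is the local smoothing gain at frequency $2^{j}$.

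\emph{Main obstacle.} Step 1 and the reduction to the principal term of $\widehat{d\sigma}$ are routine given Section \ref{pf-pros}; the real work is Step 2 --- propagating the two scales $2^{-m/2}$ and $2^{-l}$ through the rescalings and checking that the rescaled data stay in the admissible range of Lemma \ref{scaled local smoothing} throughout $0\le l\le\tfrac m2$. The most delicate point is the curvature–linearization error $O(2^{\,j-2m})$ coming from $\sqrt{\eta_1^2+2^{-m}\eta_2^2}=|\eta_1|+\tfrac{2^{-m}}{2|\eta_1|}\eta_2^2+O(2^{\,j-2m})$: when $2^{\,j-m}\lesssim 1$ the plate is essentially flat and one must argue by Bernstein's inequality rather than by cone curvature, whereas for larger $j$ this term is a genuine lower order perturbation of the cone phase; organizing these sub–cases so that each returns the single bound \eqref{l-small} is the technical heart of the argument.
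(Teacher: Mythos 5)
Your Step 1 — using the change of variables \eqref{chg} to reduce \eqref{l-small} to a weighted bound for $e^{i\tau\sqrt{-\Delta}}\mathcal P_{j,m}g$ on the slab $S_l=\{|y_1-\tau|\sim 2^{-2l}\}$, with the prefactor $2^{-j/2}2^{l/q}$ from $|r\xi|^{-1/2}$ and the Jacobian — is exactly \eqref{kl-wave} in the paper. Step 2, however, diverges from the paper and contains a genuine gap. The paper proves the remaining bound \eqref{l-sm} by a clean three-point interpolation: an $L^2(S_l)$ bound $\lesssim 2^{m/2-l}$ from Lemma \ref{thin L2 estimate}, the kernel ($L^1\to L^\infty$) bound $\lesssim 2^{3j/2}$, and the $L^4$ local smoothing bound $\lesssim 2^{\epsilon j}$ supplied by Lemma \ref{scaled local smoothing} together with Lemma \ref{freq restrict}. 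The essential new ingredient is Lemma \ref{thin L2 estimate}: the linear change of variables $\xi\mapsto(\xi_1+|\xi|,\xi_2)$ straightens the cone, Plancherel in the straightened time variable exploits the thin slab to give $2^{-l}$, and the Jacobian $1+\xi_1/|\xi|\sim 2^{-m}$ on $\supp\psi_m$ converts this into the factor $2^{m/2-l}$.

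Your proposal to replace this by a parabolic rescaling $\xi=(\eta_1,2^{-m/2}\eta_2)$ followed by a direct application of Lemma \ref{scaled local smoothing} does not reproduce the required exponents and is not carried out. Concretely: at $p=q=2$ the target factor is $2^{m/2-l}$, whereas Lemma \ref{scaled local smoothing} applied with time scale $\ell=m/2-l$ and frequency $j$ yields only $2^{(3/(2q)-1/(2p))\ell}=2^{(m/2-l)/2}$, off by a square root. More generally, matching the $\ell$-exponent of Lemma \ref{scaled local smoothing} against the target exponent $(\tfrac1p+\tfrac3q-1)(\tfrac m2-l)$ forces $\tfrac1p+\tfrac1q=\tfrac23$, so the reduction cannot work on all of \eqref{pq}. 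The statement that ``the cutoff $\chi$ may be discarded'' is precisely what must not happen: the restriction to $S_l$ is the mechanism of the gain, and dropping it leaves no way to see the factor $2^{-l}$. You also flag the curvature-linearization error and the $j\lessgtr m$ sub-case split as the ``technical heart'' but leave it open; the paper sidesteps all of this because the thin $L^2$ lemma is exact (no linearization of $|\xi|$ is needed), and the separate regime $j>m$ is treated later, in Proposition \ref{l>j/2 est}, by an $L^\infty$ kernel bound rather than by cone curvature. So the proposal identifies the right reduction (\eqref{kl-wave}) but is missing the key lemma (\ref{thin L2 estimate}) and its interpolation with the $L^4$ and $L^1\to L^\infty$ endpoints, which is where the actual content of the proposition lies.
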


\newcommand{\bx}{ \mathbf x }
\newcommand{\bt}{ \mathbf t }
In order to prove Proposition \ref{singular local smoothing}, we make the change of variables \eqref{chg}. 
Since $|k|\le 1$, we need only to consider $(r,t)$ contained in the set $[2^{-1} -10^{-2},2^2+10^2]\times[1,2]$. Set
\[  S_l=\big\{( y_1, y_2, \tau):   2^{-2l-1}\le |y_1-\tau|\le 2^{-2l+1}, \ y_1, \tau\in [2^{-3}, 2^3] \big\}.\]
By \eqref{chg}  $y_1-\tau=(r-t)^2/2$. From \eqref{jacobian} we  note $|\!\det\frac{\partial (y_1, y_2, \tau)}{\partial  (r,x_3,t)}|\sim 2^{-l}$ if $(y_1,\tau)\in S_l$.  Thus, 
changing variables $(r,x_3,t)\to  (y_1, y_2, \tau)$  we obtain 
\Be
\label{kl-wave}
\Vert  \phi_{k,l} \mathcal U_t  \pj h \Vert_{L^q_{r,x_3,t}}\lesssim    2^{-\frac 12 j}   2^{\frac{l}{q}}  \| \wvt  h \|_{L^q_{y,\tau}(S_l)}. 
\Ee
Therefore, for \eqref{l-small} 
it is sufficient to show 
\begin{align}
\label{l-sm}
\Vert   \wvt  \pjm g\|_{L^q_{y,\tau}(S_l)} &\lesssim   2^{(\frac{m}{2}-l)(\frac{1}{p}+\frac{3}{q}-1)+\frac{3j}{2}(\frac{1}{p}-\frac{1}{q})+\epsilon j}\Vert g\Vert_{L^p}
\end{align}
for $p,q$ satisfying \eqref{pq}. 
For the purpose we need the following lemma, which gives an improved $L^2$ estimate thanks to restriction of the integral over $S_l$. 
\begin{lem}\label{thin L2 estimate}
Let $D_l=\{(x_1,x_2,t):   2^{-2l}\le |x_1-t|\le 2^{-2l+1} \}$.  Then, we have
\Be \label{eq:thinl2}  \left\Vert \int e^{i(x\cdot \xi+t|\xi|)}\widehat{g}(\xi)\psi_m(\xi)d\xi \right\Vert_{L^2_{x,t}(D_l)}\lesssim 2^{\frac{m}{2}-l}\Vert g\Vert_{L^2}. \Ee
\end{lem}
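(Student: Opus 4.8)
The plan is to compute the $L^2$ norm over $D_l$ by freezing the time variable $t$, applying Plancherel in $x$, and then carefully estimating the remaining integral in $(t,\xi)$ by exploiting the localization of $\psi_m$ together with the thin slab restriction $2^{-2l}\le |x_1-t|\le 2^{-2l+1}$. First I would write $W_t g(x)=\int e^{i(x\cdot\xi+t|\xi|)}\widehat g(\xi)\psi_m(\xi)\,d\xi$ and note that for fixed $t$ we have $\|W_t g(\cdot)\|_{L^2_x(\R^2)}^2=(2\pi)^2\int |\widehat g(\xi)|^2\psi_m(\xi)^2\,d\xi$ by Plancherel, which would only give the trivial bound $\|g\|_2^2$ after integrating $t$ over an interval of length $\sim 1$. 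The point of the slab is that the support condition in $x_1$ forces $x_1$ to range over a set of measure $\sim 2^{-2l}$ for each $t$; so I would instead change the order, thinking of $D_l$ (for fixed $t$) as roughly $\{x: |x_1-t|\sim 2^{-2l}\}$, which is an $x_1$-slab of thickness $2^{-2l}$ and arbitrary $x_2$.

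The key geometric observation is that on $\supp\psi_m$ we have $|\,|\xi|^{-1}\xi_1+1\,|\sim 2^{-m}$, i.e. $\xi/|\xi|$ is within $\sim 2^{-m/2}$ of the direction $(-1,0)$; consequently the spatial frequency $\xi_1$ appearing in $e^{ix_1\xi_1}$ is comparable to $-|\xi|$, while $\xi_2=O(2^{-m/2}|\xi|)$. The plan is: fix $x_2$ and $t$, and regard $W_t g(x_1,x_2)$ as a one-dimensional inverse Fourier transform in $x_1$ of the function $\eta\mapsto \int e^{it|\xi|+ix_2\xi_2}\widehat g(\xi)\psi_m(\xi)\,\delta(\xi_1-\eta)$, so that Plancherel in $x_1$ over the whole line would again be lossless — but here we integrate $x_1$ only over a set of measure $2^{-2l}$, and we want to gain from this. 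Since $l\le m/2$, the relevant length scale $2^{-2l}$ is $\ge 2^{-m}$, so this is a genuine restriction. I would use Hölder or a direct kernel estimate: bound $\|W_tg\|_{L^2_{x_1}(|x_1-t|\sim 2^{-2l})}\le 2^{-l}\|W_tg\|_{L^\infty_{x_1}}^{1/2}\|W_t g\|_{L^2_{x_1}(\R)}^{1/2}$? That is clumsy; cleaner is to insert a smooth cutoff $\chi(2^{2l}(x_1-t))$ adapted to the slab and take its Fourier transform in $x_1$, turning the restriction into a convolution on the Fourier side at scale $2^{2l}\ge 2^{m}$ wait — $2^{2l}\le 2^m$; the cutoff's transform is concentrated in a dual interval of length $2^{2l}$, which is $\le 2^m$, hence narrower than the $\xi_1$-support (of length $\sim 2^m$ near $|\xi|\sim 2^m$... careful with the frequency scale). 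Rather than push this subtlety here, I would organize the argument as: decompose $\psi_m$ dyadically in $|\xi|$, so that on each piece $|\xi|\sim 2^\mu$; then on $\supp\psi_m$ with $|\xi|\sim 2^\mu$ the variable $\xi_1$ ranges in an interval of length $\sim 2^\mu$ near $-2^\mu$ while $\xi_2$ ranges in an interval of length $\sim 2^{\mu-m/2}$. For fixed $t,x_2$, Plancherel in $x_1$ restricted to a slab of width $2^{-2l}$ costs a factor controlled by duality between $2^{-2l}$ and the $\xi_1$-support; and the extra $x_2$-integration together with the Fourier support in $\xi_2$ of size $2^{\mu-m/2}$ is the source of the gain $2^{m/2}$ relative to the loss $2^{-l}$ we already extracted; summing the dyadic pieces in $\mu$ and recombining gives $2^{m/2-l}\|g\|_2$.

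I expect the main obstacle to be making the slab-restricted Plancherel step fully rigorous while correctly tracking the interplay of the three small/large parameters $l$, $m$, and the frequency scale $\mu$: one must verify that $l\le m/2$ is exactly what makes the width $2^{-2l}$ compatible with (not finer than) the angular thickness $2^{-m/2}$ of $\supp\psi_m$, so that the naive counting gives precisely $2^{m/2-l}$ and no worse. Concretely I would (i) reduce to a single frequency annulus $|\xi|\sim 2^\mu$ by Littlewood–Paley in $\xi$ and $L^2$ orthogonality; (ii) for each $t$, use Plancherel in $x$ but insert a smooth bump $a(x_1-t)$ equal to $1$ on the slab and supported in $\{|x_1-t|\lesssim 2^{-2l}\}$, writing $\|W_tg\|_{L^2(D_l, \,dx)}\le \|a(\cdot-t)W_tg\|_{L^2_x}$; (iii) expand $\widehat{a(\cdot-t)W_tg}$ as a convolution in $\xi_1$ of $\widehat a$ (an $L^1$-normalized bump at scale $2^{2l}$ in frequency) with $e^{it|\xi|}\widehat g\psi_m$, apply Plancherel and Minkowski, and integrate in $t\in[0,1]$; (iv) bound the $t$-integral of $|e^{it|\xi|}-e^{it|\xi'|}|$-type terms trivially by the measure $1$ of the $t$-interval, since we are not asking for any $t$-smoothing here; and (v) do the elementary volume count $\int\int |\widehat a(\xi_1-\xi_1')|\,|\widehat g(\xi)\psi_m(\xi)|^2\,d\xi\,d\xi_1'$, using $\|\widehat a\|_1\sim 1$ and the fact that the $\xi_2$-extent of $\supp\psi_m\cap\{|\xi|\sim 2^\mu\}$ is $2^{\mu-m/2}$ while its $\xi_1$-extent is $2^\mu$, so the loss from restricting $x_1$ to width $2^{-2l}$ is at most $\min(1, 2^{2l}/2^\mu)^{-1}$... and reconcile this with the target $2^{m/2-l}$ by summing over $\mu$. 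The bookkeeping in step (v) is the delicate part; everything else is routine Plancherel and Young's inequality.
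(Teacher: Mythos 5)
Your plan does not close, and the gap is at the exact point you flag as ``delicate.'' You have correctly identified the ingredients (Plancherel, the angular localization of $\psi_m$, the slab restriction in $x_1-t$), but the mechanism you propose for extracting the factor $2^{m/2-l}$ does not produce it. Inserting a smooth cutoff $a$ at scale $2^{-2l}$ and passing to a $\xi_1$-convolution against $\widehat a$ only gets you as far as Young's inequality, $\|\widehat a * F\|_{L^2}\le\|\widehat a\|_{L^1}\|F\|_{L^2}\lesssim\|F\|_{L^2}$, which is lossless and gainless. Your proposed ``volume count'' in step (v) then invokes the $\xi_2$-thinness of $\supp\psi_m\cap\{|\xi|\sim 2^\mu\}$, but since $x_2$ is integrated over all of $\mathbb R$, Plancherel in $x_2$ is exact and the $\xi_2$-localization contributes nothing; meanwhile the $\xi_1$-extent of $\supp\psi_m\cap\{|\xi|\sim 2^\mu\}$ is the full $\sim 2^\mu$, so a slab of width $2^{-2l}$ is far too coarse to interact with it. The combination $\min(1,2^{2l}/2^\mu)^{-1}$ you write down is not a bound that comes out of any correct estimate, and discarding the $t$-oscillation (step (iv)) forfeits the only remaining source of cancellation in that scheme.

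The missing idea is a change of variables that the paper makes right at the start. Write the phase as $x\cdot\xi+t|\xi|=x_1(\xi_1+|\xi|)+x_2\xi_2+(t-x_1)|\xi|$, change the time variable to $s=t-x_1$ (so that the slab condition becomes simply $s\in I_l$ with $|I_l|\sim 2^{-2l}$, decoupled from $x$), and change the frequency variable to $\eta=\mathcal L(\xi)=(\xi_1+|\xi|,\xi_2)$. The phase is now $x\cdot\eta+s|\mathcal L^{-1}\eta|$, so for each fixed $s$ Plancherel in $x$ is exact, the $s$-integration over $I_l$ costs $|I_l|^{1/2}\sim 2^{-l}$, and undoing the $\eta$-change of variables produces $\int|\widehat g\psi_m|^2\,|\!\det J\mathcal L|^{-1}\,d\xi$ with $\det J\mathcal L=1+|\xi|^{-1}\xi_1\sim 2^{-m}$ on $\supp\psi_m$, which is exactly the source of the $2^{m/2}$. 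In other words, the relevant ``thinness'' of $\supp\psi_m$ is not in $\xi_2$ but in the variable $\eta_1=\xi_1+|\xi|$: the map $\xi_1\mapsto\xi_1+|\xi|$ is near-degenerate there, and that degeneracy is what the Jacobian records. Your geometric observation that $\xi/|\xi|$ is within $2^{-m/2}$ of $(-1,0)$ is the right starting point, but it should be used to note that $\xi_1+|\xi|=O(2^{-m}|\xi|)$, not that $\xi_2=O(2^{-m/2}|\xi|)$. Also note that the dyadic decomposition in $|\xi|$ and the condition $l\le m/2$ are not needed for the lemma itself; the latter only enters later when the estimates are combined.
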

\begin{proof}
We write
$ x\cdot \xi+t|\xi|=x_1(\xi_1+|\xi|)+ x_2\xi_2+(t-x_1)|\xi|. $ Then, changing  variables   $(x,t-x_1) \to (x, t)$ and  $\xi\to \eta:= \mathcal L(\xi)=(\xi_1+|\xi|,\xi_2),$ we see
\begin{align*}
\left\Vert \int e^{i(x\cdot \xi+t|\xi|)}\widehat{g}(\xi)\psi_m(\xi)d\xi \right\Vert_{L^2_{x,t}(D_l)} \leq \Big\Vert \int e^{i(x\cdot\eta+t|\mathcal{L}^{-1}\eta|)}\frac{\widehat{h}(\mathcal{L}^{-1}\eta)}{|\!\det J\mathcal{L}(\eta)|} d\eta \Big\Vert_{L^2_{x,t}(\R^2\times I_l)}
\end{align*}
where $\widehat{h}(\xi)=\widehat{g}(\xi)\psi_m(\xi)$ and $ I_l=[-2^{-2l+1},-2^{-2l}]\cup  [2^{-2l},2^{-2l+1}].$ By Plancherel's theorem, 
we have
\begin{align*} 
\left\Vert \int e^{i(x\cdot \xi+t|\xi|)}\widehat{g}(\xi)\psi_m(\xi)d\xi \right\Vert_{L^2_{x,t}(D_l)} \le 
C2^{-l}\Big\Vert \frac{\widehat{h}(\mathcal{L}^{-1}\cdot)}{|\!\det J\mathcal{L}|} \Big\Vert_{L^2_x}.
\end{align*}
A computation shows $\det J\mathcal{L}=1+|\xi|^{-1}{\xi_1}$, so $|\!\det J\mathcal{L}| \sim 2^{-m}$ on the support of $\widehat h$.  Thus, by changing variables and Plancherel's theorem we get  
\eqref{eq:thinl2}.  
\end{proof}

We also use the following  elementary lemma. 
\begin{lem}\label{freq restrict}
For any $1\leq p\leq \infty$, $j$, and $m$, we have
\begin{align*}
 \Vert ( \phi_j\psi_m \widehat{g}\,)^{\vee} \Vert_{L^p}\lesssim \Vert g \Vert_{L^p},  
 \quad 
  \Vert  ( \phi_j\psi^m \widehat{g}\,)^{\vee}\Vert_{L^p}\lesssim \Vert g \Vert_{L^p}. 
  \end{align*}
\end{lem}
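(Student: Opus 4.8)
The plan is to deduce both bounds from the stronger claim that the convolution kernels $K_{j,m}:=(\phi_j\psi_m)^\vee$ and $\widetilde K_{j,m}:=(\phi_j\psi^m)^\vee$ lie in $L^1(\R^2)$ with norms bounded uniformly in $j$ and $m$; since $\phi_j\psi_m$ and $\phi_j\psi^m$ are smooth and compactly supported, these kernels are Schwartz functions, and Young's convolution inequality then gives $\Vert(\phi_j\psi_m\widehat g)^\vee\Vert_{L^p}=\Vert K_{j,m}\ast g\Vert_{L^p}\le\Vert K_{j,m}\Vert_{L^1}\Vert g\Vert_{L^p}$ for every $1\le p\le\infty$, and similarly for $\psi^m$. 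First I would remove the parameter $j$: since $\psi_m$ and $\psi^m$ are homogeneous of degree $0$ while $\phi_j(|\xi|)=\phi(2^{-j}|\xi|)$, the substitution $\xi\mapsto 2^j\xi$ shows $K_{j,m}(x)=2^{2j}K_{0,m}(2^jx)$, hence $\Vert K_{j,m}\Vert_{L^1}=\Vert K_{0,m}\Vert_{L^1}$ (and likewise for $\widetilde K$), so it suffices to take $j=0$.

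For the symbol $a_m(\xi):=\phi(|\xi|)\psi_m(\xi)$, note that $\psi_m(\xi)=\phi\big(2^m(|\xi|^{-1}\xi_1+1)\big)$ (the absolute value in the definition is redundant since $|\xi|^{-1}\xi_1+1\ge0$), so $a_m$ is supported in the single angular sector $\{|\xi|\sim1,\ |\xi|^{-1}\xi_1+1\sim2^{-m}\}$ about the direction $(-1,0)$; this sector has half-aperture $\sim2^{-m/2}$ and is contained in a rectangle of dimensions $\sim1\times2^{-m/2}$ with short side along the $\xi_2$-axis. Using $\partial_{\xi_1}(|\xi|^{-1}\xi_1)=\xi_2^2|\xi|^{-3}\lesssim2^{-m}$ and $\partial_{\xi_2}(|\xi|^{-1}\xi_1)=-\xi_1\xi_2|\xi|^{-3}\lesssim2^{-m/2}$ on $\supp a_m$, a chain-rule computation yields $|\partial_{\xi_1}^a\partial_{\xi_2}^b a_m|\lesssim2^{mb/2}$ for $a+b\le4$, uniformly in $m$. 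Integrating by parts $a$ times in $\xi_1$ and $b$ times in $\xi_2$ then gives, after combining with the trivial bound $|K_{0,m}|\lesssim|\supp a_m|\lesssim2^{-m/2}$, the pointwise estimate $|K_{0,m}(x)|\lesssim2^{-m/2}(1+|x_1|)^{-2}(1+2^{-m/2}|x_2|)^{-2}$, whence $\Vert K_{0,m}\Vert_{L^1}\lesssim2^{-m/2}\cdot1\cdot2^{m/2}=1$. This settles $\psi_m$.

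For $\psi^m$ I would avoid the naive identity $\psi^m=1-\sum_{0\le k<m}\psi_k$, which would lose a factor $m$; instead, telescoping $\sum_i\phi(2^{-i}s)=1$ with $s=|\xi|^{-1}\xi_1+1\ge0$ yields the decomposition $\psi^m=\big(1-\phi_{<1}(s)\big)+\phi_{<1}(2^ms)$. Multiplied by $\phi(|\xi|)$, the first summand is a fixed smooth compactly supported symbol — the factor $1-\phi_{<1}(s)$ vanishes identically near the ray $s=0$, the only place where $s$ could fail to be smooth in $\xi\ne0$ — so its inverse Fourier transform is Schwartz and contributes $O(1)$ to the $L^1$ norm; the second summand is again an innermost angular sector of aperture $\sim2^{-m/2}$ and is handled exactly as $a_m$ above (for $m=0$ one has $\psi^0\equiv1$ and there is nothing to prove). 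The only delicate point in the whole argument is the uniform-in-$m$ bookkeeping of the derivative bounds for $\psi_m$ near the degenerate direction $(-1,0)$, together with the clean splitting of $\psi^m$ into a fixed symbol plus a single sector; the rest is routine integration by parts and Young's inequality.
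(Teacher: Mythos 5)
Your argument is correct, and it rests on the same core mechanism as the paper's: after removing $j$ by rescaling, the symbol lies in an angular sector of aperture $\sim 2^{-m/2}$ about $(-1,0)$ (plus a harmless $m$-independent piece near the positive $\xi_1$-axis), each angular derivative costs at most a factor $2^{m/2}$, and integration by parts in a frame adapted to the sector yields a kernel with $L^1$-norm $O(1)$. The packaging, however, is genuinely different. The paper first reduces the $\psi_m$ bound to the $\psi^m$ bound via $\psi_m=\psi^m-\psi^{m+1}$, and then treats $\phi_0\psi^m$ directly by an anisotropic rescaling of $\xi_2$; you instead prove the $\psi_m$ case by an explicit pointwise kernel estimate and then handle $\psi^m$ via the telescoping identity $\psi^m=\bigl(1-\phi_{<1}(s)\bigr)+\phi_{<1}(2^m s)$ with $s=|\xi|^{-1}\xi_1+1$, which cleanly separates the $m$-independent part of the symbol (supported near $s\approx 2$) from the single innermost sector. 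Your route is a bit longer but more explicit, and it sidesteps a small slip in the paper at this step: the support of $\phi_0\psi^m$ near the degenerate direction sits in a box of dimensions $1\times 2^{-m/2}$, not $1\times 2^{-m}$ (as the computation $|\xi_2|/|\xi|=\sqrt{(1-\xi_1/|\xi|)(1+\xi_1/|\xi|)}\lesssim 2^{-m/2}$, which the paper itself records in the proof of the kernel bound \eqref{kernel estimate}, shows), so the paper's rescaling should read $\xi_2\mapsto 2^{-m/2}\xi_2$; with that correction both proofs close. One minor imprecision on your side: it is not $s$ but the cutoff $\phi_{<1}(s)$ that fails to be smooth across the ray $\{s=0\}$ ($s$ itself is smooth on $\xi\ne 0$); this does not affect your conclusion, since $1-\phi_{<1}(s)$ indeed vanishes on a punctured neighbourhood of that ray.
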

\begin{proof}
Since $ \psi^m- \psi^{m+1}=\psi_m$, it suffices to prove the second inequality only. By Young's inequality we need only  to show
$\Vert (\phi_j\psi^m)^{\vee}\Vert_{L^1}\lesssim 1.$
By  scaling it is clear that  $\Vert (\phi_j(\xi)\psi^m(\xi))^{\vee}\Vert_{L^1}=\Vert (\phi_0(\xi)\psi^m(\xi))^{\vee}\Vert_{L^1}.$ 
Note that $\mathfrak m(\xi):=\phi_0(\xi)\psi^m(\xi)$ is supported in a rectangular box with dimensions $1\times 2^{-m}$. So,  
$\mathfrak m(\xi_1,   2^{-m}\xi_2)$ is supported in  a cube of side length $\sim 1$ and it is easy to  see $\partial_\xi^\alpha(\mathfrak m(\xi_1,   2^{-m}\xi_2))$ is uniformly bounded for any $\alpha$. 
This gives $\| (\mathfrak m(\cdot, 2^{-m}\cdot))^\vee \|_1\lesssim 1$. Therefore,  
after  scaling 
we get
$ \Vert (\phi_0(\xi)\psi^m(\xi))^{\vee}\Vert_{L^1}\lesssim 1. $
\end{proof}

\begin{proof}[Proof of \eqref{l-sm}]
In view of interpolation  the estimate \eqref{l-sm} follows for $p,q$ satisfying \eqref{pq}   if  we show the next three estimates:
\begin{align}
&\Vert  \wvt \pjm g \|_{L^2_{y,\tau}(S_l)} \lesssim 2^{\frac{m}{2}-l}\Vert g\Vert_{L^2},
\label{l2}
\\
& \Vert  \wvt \pjm g \|_{L^\infty_{y,\tau}(S_l)}\lesssim 2^{\frac{3j}{2}}\Vert g\Vert_{L^1}, 
\label{1toinfty}
\\
& \Vert  \wvt \pjm g \|_{L^4_{y,\tau}(S_l)} \lesssim 2^{\epsilon j}\Vert g\Vert_{L^4}.
\nonumber
\end{align}
 The first estimate follows from Proposition \ref{thin L2 estimate}.  Lemma \ref{scaled local smoothing} and Lemma \ref{freq restrict}  give the other two estimates. 
\end{proof}

It is possible to improve the estimate  \eqref{l-small} when  $j>m$.      
\begin{prop}\label{l>j/2 est}
Let $j\ge -1$ and $k=-1,0,1$. Suppose  $1\le p\leq q$, $1/{p}+1/{q}\leq 1$, and  $j>m$, then
\[
\Vert\phi_{k,l} \mathcal U_t \mathcal{P}_{j,m}g\Vert_{L^q_{r,x_3,t}}\lesssim 2^{-\frac{j}{2}}2^{\frac{l}{q}}2^{\frac{2}{q}(\frac{m}{2}-l)+\frac{j-m}{2}(1-\frac{1}{p}-\frac{1}{q})+\frac{3j}{2}(\frac{1}{p}-\frac{1}{q})}\Vert g\Vert_{L^p}.
\]
\end{prop}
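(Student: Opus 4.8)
The plan is to use the change of variables \eqref{chg} to turn the problem into an $L^p$--$L^q$ estimate for the half-wave propagator acting on the sectorial piece $\mathcal{P}_{j,m}g$ and measured over the thin slab $S_l$, and then to prove that estimate by interpolating three endpoint bounds. Since $|k|\le 1$ and $(y_1,\tau)\in S_l$ force $\big|\det\tfrac{\partial(y_1,y_2,\tau)}{\partial(r,x_3,t)}\big|=|r^2-t^2|\sim 2^{-l}$, the computation behind \eqref{kl-wave}, carried out with the cutoff $\phi_j\psi_m$ in place of $\phi_j$ (the radial amplitude $|r\xi|^{-\frac12}\sim 2^{-j/2}$ being pulled out exactly as there), yields
\[ \Vert \phi_{k,l}\,\mathcal U_t\,\mathcal{P}_{j,m}g\Vert_{L^q_{r,x_3,t}}\lesssim 2^{-\frac12 j}\,2^{\frac lq}\,\big\Vert e^{i\tau\sqrt{-\Delta}}\mathcal{P}_{j,m}g\big\Vert_{L^q_{y,\tau}(S_l)} . \]
Matching the prefactor $2^{-j/2}2^{l/q}$ of the statement, it is therefore enough to show
\[ \big\Vert e^{i\tau\sqrt{-\Delta}}\mathcal{P}_{j,m}g\big\Vert_{L^q_{y,\tau}(S_l)}\lesssim 2^{\frac{j-m}{2}+\left(j+\frac m2\right)\frac1p+\left(\frac{3m}{2}-2l-2j\right)\frac1q}\Vert g\Vert_{L^p} \]
for $(1/p,1/q)$ in the closed triangle with vertices $(1/2,1/2),(1,0),(0,0)$, which is exactly the range $p\le q$, $1/p+1/q\le1$; note this affine exponent reduces to $m/2-l$, $3j/2$, and $(j-m)/2$ at the three vertices respectively.

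I would prove the last display by interpolating its three vertex cases. At $(1/p,1/q)=(1/2,1/2)$ it is the thin $L^2$ bound $\Vert e^{i\tau\sqrt{-\Delta}}\mathcal{P}_{j,m}g\Vert_{L^2(S_l)}\lesssim 2^{m/2-l}\Vert g\Vert_{L^2}$, which is \eqref{l2} (equivalently Lemma \ref{thin L2 estimate}). At $(1/p,1/q)=(1,0)$ it is the fixed-time dispersive bound $\Vert e^{i\tau\sqrt{-\Delta}}\mathcal{P}_{j,m}g\Vert_{L^\infty(S_l)}\lesssim 2^{3j/2}\Vert g\Vert_{L^1}$, obtained by estimating the convolution kernel $\int e^{i(y\cdot\xi+\tau|\xi|)}\phi_j(|\xi|)\psi_m(\xi)\,d\xi$ by stationary phase in the single nondegenerate (angular) direction of the phase $y\cdot\xi+\tau|\xi|$, which gives the pointwise bound $O(2^{2j}\cdot 2^{-j/2})$ uniformly for $\tau\sim1$ (the cutoff $\psi_m$ only helps here). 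The remaining vertex $(1/p,1/q)=(0,0)$ requires
\[ \big\Vert e^{i\tau\sqrt{-\Delta}}\mathcal{P}_{j,m}g\big\Vert_{L^\infty(S_l)}\lesssim 2^{(j-m)/2}\Vert g\Vert_{L^\infty}, \]
that is, $\sup_{\tau\sim1}\big\Vert \big(e^{i\tau|\xi|}\phi_j(|\xi|)\psi_m(\xi)\big)^{\vee}\big\Vert_{L^1}\lesssim 2^{(j-m)/2}$, and this is the one point where the hypothesis $j>m$ is genuinely used.

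To prove this $L^1$-kernel bound I would pass to polar coordinates for $\xi$ and analyze the angular integral by stationary phase: on the annulus $|\xi|\sim 2^j$ the angular part of $y\cdot\xi+\tau|\xi|$ has a nondegenerate critical point with second derivative of size $\sim 2^j$, so the critical direction is resolved at angular scale $2^{-j/2}$; since $\psi_m$ localizes $\xi$ to an angular set of measure $\sim 2^{-m/2}\gg 2^{-j/2}$ — exactly the place $j>m$ enters — this critical direction sits inside the support precisely for $y$ in an arc of the circle $\{|y|=\tau\}$ of arclength $\sim 2^{-m/2}$. Stationary phase then gives $|(e^{i\tau|\xi|}\phi_j\psi_m)^{\vee}(y)|\sim 2^{2j}\cdot 2^{-j/2}=2^{3j/2}$ on that arc, thickened radially to width $\sim 2^{-j}$, with rapid decay off it; integrating over the resulting set of measure $\sim 2^{-j-m/2}$ produces $2^{3j/2}\cdot 2^{-j-m/2}=2^{(j-m)/2}$. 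I expect this stationary-phase estimate to be the main technical step; the only care needed is in the regime where $j$ greatly exceeds $m$, where one checks that the higher-order terms of the angular phase beyond the quadratic one are lower-order perturbations (they contribute to the second derivative only at order $2^{j-2m}\ll 2^{j-m}$) and so merely curve the arc without changing its measure. Interpolating the three vertex estimates and reversing the reduction then yields the proposition.
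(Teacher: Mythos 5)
Your reduction is exactly the paper's: you use \eqref{kl-wave} to replace $\phi_{k,l}\mathcal U_t\mathcal P_{j,m}$ by the half-wave on the slab $S_l$, then interpolate the three vertex bounds $(1/2,1/2)$, $(1,0)$, $(0,0)$ — which are precisely \eqref{l2}, \eqref{1toinfty}, and \eqref{Linfty estimate} — and the whole proposition collapses to the single new kernel bound $\Vert K^{j,m}_t\Vert_{L^1}\lesssim 2^{(j-m)/2}$, which is where $j>m$ enters. So the structure of your argument is identical to the paper's.

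The one place you diverge is the proof of the $L^1$ kernel bound. The paper's route is short and airtight: $\supp\psi_m$ sits in a cone of aperture $\sim 2^{-m/2}$, which splits into $O(2^{(j-m)/2})$ sub-cones of aperture $2^{-j/2}$; each sub-cone piece of the kernel has $L^1$ norm $O(1)$ (the standard single-sector wave kernel estimate from \cite{L}), and the triangle inequality finishes. You instead attempt a direct stationary-phase computation on the full $\psi_m$-cone. The numerology you produce (peak $\sim 2^{3j/2}$ on an arc of length $2^{-m/2}$ of the circle $|y|=\tau$, thickened radially to $2^{-j}$) is correct and gives the right answer, but the "with rapid decay off it" is exactly the part that needs work for an $L^1$ bound: the kernel's decay transverse to the arc at the two angular endpoints is governed by integration by parts against a phase whose angular derivative can be as small as $\sim 2^{j-m/2}$ right at the edge of the cone, and controlling the resulting polynomial tails in $L^1$ is most cleanly done by precisely the $2^{-j/2}$-sector decomposition the paper uses. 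In other words, your stationary-phase heuristic is what the paper's decomposition argument is rigorously encoding, and if you try to make yours rigorous you will likely end up re-inventing the decomposition. One small slip: the higher-order (quartic) correction to the angular phase over an angular window $\lesssim 2^{-m/2}$ contributes at order $2^{j}\cdot 2^{-m}=2^{j-m}$, not $2^{j-2m}$; this doesn't affect your conclusion ($2^{j-m}\ll 2^{j}$ still holds, which is all the nondegeneracy of the quadratic term needs), but the exponent you wrote is off.

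In short: same approach as the paper, with the final kernel estimate argued heuristically by direct stationary phase rather than by sub-sector decomposition and the triangle inequality; the latter is the clean way to make the heuristic rigorous.
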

\begin{proof}  
By \eqref{kl-wave} it is sufficient to show  
\[\Vert  \wvt \pjm g\Vert_{{L^q_{y,\tau}(S_l)}}\lesssim 2^{\frac{2}{q}(\frac{m}{2}-l)+\frac{j-m}{2}(1-\frac{1}{p}-\frac{1}{q})+\frac{3j}{2}(\frac{1}{p}-\frac{1}{q})}\Vert g\Vert_{L^p}\]
for $p,q$ satisfying  $1\le p\leq q$, $1/{p}+1/{q}\leq 1$. In fact, by  interpolation with the estimates  \eqref{l2} and \eqref{1toinfty}  we only have to show 
\begin{equation}\label{Linfty estimate}
\Vert  \wvt \pjm g \|_{L^\infty_{y,\tau}(S_l)} \lesssim 2^{\frac{j-m}{2}}\Vert g\Vert_{L^\infty}.
\end{equation}

Let us set 
\[  K^{j,m}_t(x)= \frac1{(2\pi)^2}\int e^{i(x\cdot\xi+t|\xi|)}\phi_j(|\xi|)\psi_m(\xi)d\xi.\] 
Then $   \wvt \pjm g  =   K^{j,m}_\tau \ast g .$
Therefore, \eqref{Linfty estimate} follows  if we show 
\begin{equation}\label{kernel estimate}  \|  K^{j,m}_t \| _{L^1_x}\lesssim 2^{\frac{j-m}{2}} \end{equation} 
when $t\sim 1$. 
Note that $ |\xi_2|/|\xi|=\sqrt{1-\xi_1/|\xi|}\sqrt{1+\xi_1/|\xi|}\lesssim 2^{-\frac{m}{2}} $ if $\xi\in \supp \psi_m$. 
So, $\supp \psi_m$ is contained in a conic sector with angle $\sim 2^{-\frac{m}{2}}$. Let $\mathcal{S}$ be a sector centered at the origin in $\R^2$ with angle $\sim 2^{-\frac{j}{2}}$ and $\phi_\mathcal{S}$ be a cut-off function adapted to $\mathcal{S}$. Then, by integration by parts it follows that 
\[ \Big
\Vert \int e^{i(x\cdot\xi+t|\xi|)}\phi_j(|\xi|)\phi_\mathcal{S}(\xi)d\xi \Big\Vert_{L^1_x}\lesssim 1 \] 
if $t\sim 1$.
(See, for example, \cite{L}). Now \eqref{kernel estimate} is clear since the support of $\psi_m$ can be decomposed into  as many as $C2^{\frac{j-m}{2}}$ such sectors. 
\end{proof}

Finally,  we prove Proposition \ref{goal4}  making use of Proposition \ref{singular local smoothing} and \ref{l>j/2 est}.
We recall  \eqref{def-A} and \eqref{asym}. 
As  mentioned  before, by  Lemma \ref{lem:err}  we need only to consider  
$\mathcal U_t$ (see \eqref{def-uk}) and it is sufficient to show
\Be
\label{uk-est}
\big \Vert \sup_{1<t<2}|  \phi_k(r) \mathcal U_t \pj g|  \big\Vert_{L^q_{r,x_3}}\lesssim 2^{ \frac12 (\frac{3}{p}-\frac{1}{q}-1)j  +\epsilon j}\Vert g\Vert_{L^p}
 \Ee
for $p,q$ satisfying $p\leq q$, $1/{p}+1/{q}< 1$ and $1/{p}+2/{q}> 1$.  

\begin{proof}[Proof of \eqref{uk-est}]
Let us set $ \phi^l(\cdot)=1-\sum_{j=0}^{l-1} \phi (2^j\cdot)$ and $\phi_k^l(r,t)=\phi_k(r)\phi^l(|r-t|)$. Then, we decompose 
\[\phi_k(r)=\sum_{0\le l\le j/2} \phi_{k,l}(r,t)  +  \sum_{j/2<  l< j}^{} \phi_{k,l}(r,t)  +  \phi^j_k(r,t). \]
Combining this with \eqref{proj-m} and using $ \sum_{\frac{j}{2}<l< j} \phi_{k,l}\le   \phi_{k}^{[j/2]-1}$, by the triangle inequality we have
\begin{align*}
&\big\Vert  \sup_{1<t<2}|  \phi_k(r) \mathcal U_t \pj g| \big\Vert_{L^q}  \le \sum_{\ell=1}^5 S_\ell,
\end{align*}
where 
\begin{align*}
S_1&= \sum_{0\le l\le j/2}\,  \sum_{0\le m\le l-1} \big \Vert \sup_{1<t<2} \phi_{k,l} |\mathcal U_t \pjm g |\big\Vert_{L^q},
\quad 
S_2=\sum_{0\le l\le j/2}\Vert\sup_{1<t<2}\phi_{k,l} |\mathcal U_t \mathcal{P}_j^l g| \Vert_{L^q},
\\
S_3&=\sum_{\frac{j}{2}<l< j}\,\sum_{0\le m\le j-1}\Vert \sup_{1<t<2} \phi_{k,l} |\mathcal U_t \pjm g |\Vert_{L^q} ,
\quad  \
S_4= \sum_{0\le m\le j-1} \Vert   \sup_{1<t<2} \phi_{k}^j |\mathcal U_t \pjm g | \Vert_{L^q},
\\
&\qquad \qquad \qquad \qquad \qquad  S_5=\Vert  \sup_{1<t<2} \phi_{k}^{[j/2]-1}  |\mathcal U_t \pj^jg | \Vert_{L^q}.
\end{align*}
The proof of \eqref{uk-est}  is now reduced to showing 
\Be
\label{ss-}
S_\ell \lesssim 2^{ \frac12 (\frac{3}{p}-\frac{1}{q}-1)j  +\epsilon j}\Vert g\Vert_{L^p}, \quad 1\le \ell\le 5, 
 \Ee
for $p,q$ satisfying $p\leq q$, $1/{p}+1/{q}< 1$ and $1/{p}+2/{q}> 1$.

We first consider $S_1$. Using Lemma \ref{sobo}, we need to estimate  $\phi_{k,l} \mathcal U_t \pjm g $  and   $\partial_t (\phi_{k,l} \mathcal U_t \pjm g)$ in  $L^q_{r,x_3,t}(\mathbb R^2\times [1,2])$. Writing $t\xi_1+r|\xi|= t\big( |\xi|^{-1} \xi_1 +1\big )+(r-t)$, we note that
\Be
\label{compare2}
 | t\xi_1+r|\xi|| 
\lesssim 2^j\max\{ 2^{-m}, 2^{-l}\}. \Ee
Note that $\partial_t  \phi_{k,l}=O(2^l)$ and $2^l\lesssim 2^{j-m}$. Thus,  recalling \eqref{partial-uk}, we apply Lemma \ref{sobo} and the Mikhlin multiplier theorem to get
\[  S_1\lesssim \sum_{0\le l\le j/2}  \sum_{m=0}^{l-1} 2^{\frac{j-m}{q}} \big \Vert \phi_{k,l} \mathcal U_t \pjm g \big\Vert_{L^q}.\] 
Thus, by Proposition \ref{singular local smoothing}  it follows
\begin{align*}
S_1
& \lesssim 2^{-\frac{j}{2}+\frac{j}{q}+\frac{3j}{2}(\frac{1}{p}-\frac{1}{q})+\epsilon j} \sum_{0\le l\le j/2}  2^{l(1-\frac{1}{p}-\frac{2}{q})}\sum_{m=0}^{l-1}2^{\frac{m}{2}(\frac{1}{p}+\frac{1}{q}-1)}\Vert g\Vert_{L^p}.
\end{align*}
Since $1/{p}+1/{q}-1< 0$ and $1/p+2/q>1$,  we obtain \eqref{ss-} with $\ell=1$. 

We now estimate $S_3$, which can be handled similarly.   Since  $\partial_t  \phi_{k,l}=O(2^l)$ and $2^l\gtrsim 2^{j-l}$, using \eqref{compare2} and \eqref{partial-uk},  we see \[ S_3 \lesssim  \sum_{\substack{j/2<l< j}}\sum_{ 0\leq m\leq j-1} 2^{\frac{l}{q}}\Vert \phi_{k,l} \mathcal U_t \pjm g \Vert_{L^q}\] by applying Lemma \ref{sobo} and the Mikhlin multiplier theorem.  Thus, using Proposition \ref{l>j/2 est}, we get  \eqref{ss-} with $\ell=3$  since  $1/p+2/q>1$. 

We can show the estimate \eqref{ss-} with $\ell=2$ in the same manner. As before, since  $\partial_t  \phi_{k,l}=O(2^l)$ and $2^l\lesssim  2^{j-l}$, using \eqref{compare2} and applying Lemma \ref{sobo} and the Mikhlin multiplier theorem,  we have 
\[ S_2\lesssim \sum_{0\le l\le j/2}   2^{\frac{j-l}{q}} \big \Vert \phi_{k,l} \mathcal U_t \pj^l g \big\Vert_{L^q}.\]  Thus, by \eqref{kl-wave} and Proposition \ref{local-pq}, we have
$S_2\lesssim \sum_{0\le j\le \frac{j}{2}}   2^{-\frac{j}{2}}  2^{\frac{j}{q}+\frac{3j}{2}(\frac{1}{p}-\frac{1}{q})+\frac\epsilon 2 j}\Vert g\Vert_{L^p}, 
$
which gives   \eqref{ss-} with $\ell=2$. 
 
We handle $S_4$ and $S_5$ without  relying on Lemma \ref{sobo}. Instead, we control $S_4$ and $S_5$ more directly. 
Concerning $S_4$  we claim that
\Be\label{s4} 
S_4\lesssim 2^{ \frac12 (\frac{3}{p}-\frac{1}{q}-1)j }\Vert g\Vert_{L^p} 
\Ee
if  ${5}/{q}> 1+{1}/{p}$ and $2\le p\le q\le \infty$.  This clearly gives \eqref{ss-} with $\ell=4$  for $p,q$ satisfying $p\leq q$, $1/{p}+1/{q}< 1$ and $1/{p}+2/{q}> 1$. We note that 
\[ |\phi_{k}^j \mathcal U_t \pjm g (r,x_3)|\lesssim   2^{-\frac 12 j}\Big|\phi_{k}^j \int e^{i2^j(r^2\xi_1+x_3\xi_2+ r^2|\xi|)} \mathfrak m(\xi)\phi_0(\xi)\psi_m(\xi)\widehat{g(2^{-j}\cdot)}(\xi)d\xi\Big|,  \]
where 
\[  \mathfrak m(\xi)= e^{i2^j(\frac{t^2-r^2}{2}\xi_1+ (t-r)r|\xi|)}|\xi|^{-\frac{1}{2}} \widetilde \phi_0(\xi), \] 
and $\widetilde\phi_0$ is a smooth function supported in $[-\pi, \pi]^2$ such that $\widetilde\phi_0\phi_0=1$. If $(r,t)\in \supp \phi_{k}^j$, then $|t-r|\lesssim 2^{-j}$. 
Thus, $|\partial_\xi^\alpha m(\xi)|\lesssim 1$ for any $\alpha$. Expanding $\mathfrak m$ into Fourier series on  $[-\pi, \pi]^2$ we have 
$ \mathfrak m(\xi)= \sum_{k\in \mathbb Z^2} C_{\mathbf k} (r,t)   e^{i\mathbf k\cdot\xi}$ 
while $|C_{\mathbf k} (r,t)|\lesssim (1+|\mathbf k|)^{-N} $.  Therefore, after scaling $\xi\to 2^j\xi$, 
the estimate  \eqref{s4} follows if we obtain  
\[
\Vert \mathcal R \pjm  g\Vert_{L^q_{r,x_3}([2^{-2}, 2^3]\times \mathbb R)} \lesssim  2^{ \frac12 (\frac{3}{p}-\frac{1}{q})j }\Vert g\Vert_{L^p},
\]
where 
\[ \mathcal R  g(r,x_3)=\int e^{i(r^2\xi_1+x_3\xi_2+ r^2|\xi|)} \widehat{g}(\xi)d\xi.\]
When $q=2$, changing variables $r^2\to r$ and following the argument in  the proof of Lemma \ref{thin L2 estimate} we have
$\Vert \mathcal R \pjm  g\Vert_{L^2_{r,x_3}([2^{-2}, 2^3]\times \mathbb R)} \lesssim 2^{{m}/{2}}\Vert g\Vert_{L^2}.$
On the other hand,   \eqref{Linfty estimate} gives
$\Vert \mathcal R \pjm g\Vert_{L^\infty_{r,x_3}([2^{-2}, 2^3]\times \mathbb R)}\lesssim 2^{(j-m)/{2}}\Vert g\Vert_{L^\infty}.$
Interpolation  between these two estimates gives
\[ \Vert \mathcal R\pjm g\Vert_{L^q_{r,x_3}([2^{-2}, 2^3]\times \mathbb R)} \lesssim 2^{\frac{m}{q}+\frac{j-m}{2}(1-\frac{2}{q})}\Vert g\Vert_{L^q} \]
for $2\leq q\leq \infty$. 
Since the support $\widehat{\pjm g}(\xi)$ is contained in a rectangular region of dimensions  $2^j\times  2^{j-\frac{m}{2}}$, by Bernstein's inequality  we have
\[ \Vert \mathcal R_m^j  g\Vert_{L^q_{r,x_3}([2^{-2}, 2^3]\times \mathbb R)}\lesssim 2^{j(\frac{2}{p}-\frac{3}{q})+m(\frac{5}{2q}-\frac{1}{2}-\frac{1}{2p})}\Vert g\Vert_{L^p}\]
for $2\le p\le q\le \infty$. Since  ${5}/{q}> 1+{1}/{p}$, this proves the claimed estimate \eqref{s4}.

Finally, we show \eqref{ss-} with $\ell=5$. Changing variables $(\xi_1, \xi_2)\to (2^j\xi_1, \xi_2)$, we observe 
 \[ \phi_{k}^{[j/2]-1} | \mathcal U_t \pj^j g (r,x_3)|\lesssim   2^{\frac j2}\phi_{k}^{[j/2]-1} 
\Big|\int e^{i(\frac{(r-t)^2}2  2^j\xi_1+x_3\xi_2)} \mathfrak m(\xi)\widehat{\mathcal P_j^j g}(2^j\xi_1, \xi_2)d\xi\Big|,  \]
where 
\[  \widetilde{\mathfrak m}(\xi)= e^{i2^j rt( |(\xi_1, 2^{-j}\xi_2)|-\xi_1)}|(\xi_1, 2^{-j}\xi_2)|^{-\frac{1}{2}} \widetilde \phi_0(|(\xi_1, 2^{-j}\xi_2)|)  \psi^{j-1}(2^j\xi_1, \xi_2).\]  
Note that $ \supp \widetilde{\mathfrak m}\subset \{ \xi_1\in [2^{-1}, 2^2] , |\xi_2| \le 2^2  \}$. Since 
$|\partial_\xi^\alpha m(\xi)|\lesssim 1$ for any $\alpha$,  expanding $\widetilde{\mathfrak m}$ into Fourier series on  $[-2\pi, 2\pi]^2$ we have
$ \widetilde{\mathfrak m}(\xi)= \sum_{k\in \mathbb Z^2} C_{\mathbf   k} (r,t)   e^{i 2^{-1}\mathbf k\cdot\xi}$ 
while $|C_{\mathbf k} (r,t)|\lesssim (1+|\mathbf k|)^{-N} $.  Hence, similarly as before, changing variables $(\xi_1, \xi_2)\to (2^{-j}\xi_1, \xi_2)$, 
to show  \eqref{ss-} with $\ell=5$  it is sufficient to obtain  
\Be
\label{3.22}
\Big\Vert \sup_{1<t<2} \mathcal{P}_j^j g\Big(\frac{(r-t)^2}{2},x_3\Big) \Big\Vert_{L^q_{r,x_3}([2^{-2}, 2^3]\times \mathbb R)} 
\lesssim 2^{ \frac12 (\frac{3}{p}-\frac{1}{q})j }\Vert g\Vert_{L^p}
\Ee
for $1\le p\le q\le \infty$. 
Clearly, the left hand side is bounded by  $\Vert \mathcal{P}_j^j g(x_1,x_3)\Vert_{L^q_{x_3}(L^\infty_{x_1})}$.  
The Fourier transform of $ \mathcal{P}_j^j g$ is supported on the rectangle $\{ \xi_1\in [2^{j-1}, 2^{j+2}] , |\xi_2| \le 2^{j+2}  \}$.  
Thus, using  Bernstein's inequality in $x_1$, we get 
\begin{align*}
\Big\Vert \sup_{1<t<2}\mathcal{P}_j^j g\Big(\frac{(r-t)^2}{2},x_3\Big) \Big\Vert_{L^q_{r,x_3}([2^{-2}, 2^3]\times \mathbb R)} 
& \lesssim 2^{-\frac{j}{2}+\frac{j}{q}}\Vert \mathcal{P}_j^j g\Vert_{L^q} 
\end{align*}
for $1\le q\le \infty$. So,  another use of  Bernstein's inequality gives \eqref{3.22} for $1\le p\le q\le \infty$.
 This completes the proof of  \eqref{uk-est}.
\end{proof}

\noindent{\bf Acknowledgement.}  Juyoung Lee was supported by the National Research Foundation of Korea (NRF) grant no. 2017H1A2A1043158  and Sanghyuk Lee  was supported  by NRF grant no.  2021R1A2B5B02001786.  

\bibliographystyle{plain}

\end{document}